\numberwithin{equation}{section}
\newtheorem{theorem}{Theorem}[section]
\newtheorem{lemma}[theorem]{Lemma}
\newtheorem{corollary}[theorem]{Corollary}
\theoremstyle{definition}
\newtheorem{remark}[theorem]{Remark}
\theoremstyle{remark}
\newenvironment{romenumerate}[1][0pt]{
\addtolength{\leftmargini}{#1}\begin{enumerate}
 }{\end{enumerate}}
\newcounter{oldenumi}
{\setcounter{oldenumi}{\value{enumi}}
\begin{romenumerate} \setcounter{enumi}{\value{oldenumi}}}
{\end{romenumerate}}
\newcounter{thmenumerate}
\newcounter{romxenumerate}   
\newcounter{xenumerate}   
\newcommand{\refT}[1]{Theorem~\ref{#1}}
\newcommand{\refL}[1]{Lemma~\ref{#1}}
\newcommand{\refR}[1]{Remark~\ref{#1}}
\newcommand{\refS}[1]{Section~\ref{#1}}
\newcommand{\refand}[2]{\ref{#1} and~\ref{#2}}
\xdef\klockan{\the\count1.0\the\count255}
\xdef\klockan{\the\count1.\the\count255}\fi
\newcommand\nopf{\qed}   
\newcommand{\sumim}{\sum_{i=1}^m}
\newcommand{\sumin}{\sum_{i=1}^n}
\newcommand{\sumjn}{\sum_{j=1}^n}
\newcommand\set[1]{\ensuremath{\{#1\}}}
\newcommand\bigpar[1]{\bigl(#1\bigr)}
\newcommand\Bigpar[1]{\Bigl(#1\Bigr)}
\newcommand\lrpar[1]{\left(#1\right)}
\newcommand\bigabs[1]{\bigl|#1\bigr|}
\newcommand\lrabs[1]{\left|#1\right|}
\def\rompar(#1){\textup(#1\textup)}    
\def\xexp(#1){e^{#1}}
\newcommand\setn{\set{1,\dots,n}}
\newcommand\ntoo{\ensuremath{{n\to\infty}}}
\newcommand\mtoo{\ensuremath{{m\to\infty}}}
\newcommand\iid{i.i.d.\spacefactor=1000}    
\newcommand\eg{e.g.\spacefactor=1000}
\newcommand\cf{cf.\spacefactor=1000}
\newcommand{\as}{a.s.\spacefactor=1000}
\newcommand\ii{\mathrm{i}}
\newcommand{\tend}{\longrightarrow}
\newcommand\dto{\overset{\mathrm{d}}{\tend}}
\newcommand\eqd{\overset{\mathrm{d}}{=}}
\newcommand\bbR{\mathbb R}
\newcommand\bbN{\mathbb N}  
\newcommand\bbZ{\mathbb Z}
\newcounter{CC}
\newcounter{cc}
\newcommand{\cc}{\stepcounter{cc}\ccx} 
\newcommand{\ccx}{c_{\arabic{cc}}}     
\newcommand{\ccdef}[1]{\xdef#1{\ccx}}     
\newcommand\E{\operatorname{\mathbb E{}}}
\renewcommand\P{\operatorname{\mathbb P{}}}
\newcommand\Var{\operatorname{Var}}
\newcommand\Cov{\operatorname{Cov}}
\newcommand\Bi{\operatorname{Bi}}
\newcommand\gf{\varphi}
\newcommand\gs{\sigma}
\newcommand\gss{\sigma^2}
\newcommand\gth{\theta}
\newcommand\cA{\mathcal A}
\newcommand\cE{\mathcal E}
\newcommand\cG{\mathcal G}
\newcommand\cS{{\mathcal S}}
\newcommand\tH{{\tilde H}}
\newcommand\ett[1]{\boldsymbol1\set{#1}} 
\def\[#1]{[\![#1]\!]}
\newcommand\qq{^{1/2}}
\newcommand\qqc{^{3/2}}
\newcommand\qqcw{^{-3/2}}
\newcommand\qqw{^{-1/2}}
\renewcommand{\=}{:=}
\newcommand\intoooo{\int_{-\infty}^\infty}
\newcommand\oi{[0,1]}
\newcommand\dtv{d_{\mathrm{TV}}}
\newcommand\dd{\,\textup{d}}
\newcommand{\pgf}{probability generating function}
\newcommand{\chf}{characteristic function}
\newcommand\kkm{{k_1,\dots,k_m}}
\newcommand\sumkkm{\sum_{k_1+\dots+k_m=n}}
\newcommand\nnm{{n_1,\dots,n_m}}
\newcommand\NNm{{N_1,\dots,N_m}}
\newcommand\ggxn[1]{G^{(#1)}_n}
\newcommand\ggmn{\ggxn{m}}
\newcommand\ggmnq{\ggxn{m}(q)}
\newcommand\gggxn[1]{g^{(#1)}_n}
\newcommand\gggmn{\gggxn{m}}
\newcommand\gggmnq{\gggxn{m}(q)}
\newcommand\gnm{G_{n,m}}
\newcommand\gnx[1]{G_{n,#1}}
\newcommand\ellj{\ell}
\newcommand\mm{^{(m)}}
\newcommand\setm{\set{1,\dots,m}}
\newcommand\inv{\operatorname{Inv}}
\newcommand\cgnm{\cG_{n,m}}
\newcommand\cgnx[1]{\cG_{n,#1}}
\newcommand\wnm{W_{n,m}}
\newcommand\ijx{_{ij}}
\newcommand\jix{_{ji}}
\newcommand\ixij{I^*\ijx}
\newcommand\sumijn{\sum_{1\le i<j\le n}}
\newcommand\nm{_{n,m}}
\newcommand\eith{e^{\ii\theta}}
\newcommand\gfn{\gf_n}
\newcommand\bmu{\bar\mu}
\newcommand\bgs{\bar\sigma}
\newcommand\bgss{\bar\sigma^2}
\newcommand{\Takacs}{Tak\'acs}
\newcommand\urladdrx[1]{{\urladdr{\def~{{\tiny$\sim$}}#1}}}
\begin{document}
\title[Generalized Galois numbers and limit theorems]
{Generalized Galois numbers, inversions, lattice paths, Ferrers diagrams
and limit theorems}

\date{29 March, 2012} 

\author{Svante Janson}
\address{Department of Mathematics, Uppsala University, PO Box 480,
SE-751~06 Uppsala, Sweden}
\email{svante.janson@math.uu.se}
\urladdrx{http://www.math.uu.se/~svante/}

\subjclass[2000]{05A16; 05A15, 60C05, 60F05} 
%
%

\begin{abstract}
Bliem and Kousidis recently considered a family of random variables whose
distributions are given by the generalized Galois numbers (after
normalization).
We give probabilistic interpretations of these random variables, using
inversions in random words, random lattice paths and random Ferrers diagrams,
and use these to give new proofs of limit theorems as well as some
further limit results.
\end{abstract}

\maketitle

\section{Introduction}\label{S:intro}

The \emph{homogeneous multivariate Rogers--Szeg\"o polynomial}
in $m\ge2$ variables
is defined by 
\begin{equation}
  \tH_n(t_1,\dots,t_m)
\= \sumkkm \binom{n}{\kkm}_q t_1^{k_1}\dotsm  t_m^{k_m},
\end{equation}
where $\binom n{\kkm}_q$ is the \emph{$q$-multinomial coefficient}
(or \emph{Gaussian multinomial coefficient})
\begin{equation}
  \label{qmulti}
\binom{n}{\kkm}_q :=
\frac{[n]!_q}{[k_1]!_q \cdots [k_m]!_q}
\qquad \text{for } n=k_1+\dots +k_m,
\end{equation}
where $[k]!_q:=[1]_q[2]_q \cdots [k]_q$ with $[\ell]_q:=(1-q^\ell)/(1-q)$.
Equivalently, one might consider the
\emph{inhomogeneous multivariate Rogers--Szeg\"o polynomial}
\begin{equation}
  H_n(t_1,\dots,t_{m-1})\=  \tH_n(t_1,\dots,t_{m-1},1).
\end{equation}
For these polynomials, see \citet{Rogers}, \citet{Andrews} and
\citet{Vinroot}.

We concentrate here on the special value
\begin{equation}\label{ggmn}
  \ggmn(q)=H_n(1,\dots,1)=\tH_n(1,\dots,1)
= \sumkkm \binom{n}{\kkm}_q ,
\end{equation}
studied in \citet{Vinroot} and \citet{BK}.
This is a polynomial in $q$. In the special case $m=2$,
studied in \eg{} \citet{GR}, \citet{NSW}, \citet[Chapter 7]{KacCheung} and \citet{HH}, these numbers
$\ggxn2(q)$ 
are known as \emph{Galois  numbers}, 
and the numbers  $\ggmn$ are therefore called 
\emph{generalized Galois numbers} by \cite{Vinroot} and \cite{BK}.
Note that\begin{equation}
  \ggmn(1)
= \sumkkm \binom{n}{\kkm}=m^n,
\end{equation}
by the multinomial theorem.

\citet{BK} noted that the polynomial $\ggmn(q)$ has non-negative
coefficients, and thus
\begin{equation}\label{pgf}
\gggmn(q)\=
\frac{\ggmn(q)}{\ggmn(1)}=  m^{-n}\ggmn(q)
\end{equation}
can be interpreted as the \pgf{} of a random variable $\gnm$.
We let $\cgnm$  denote the probability distribution with the \pgf{}
\eqref{pgf}, and have thus $\gnm\sim\cgnm$. (We use, following \cite{BK},
$\gnm$ for an arbitrary 
random variable with this distribution. In the next sections we will
construct specific random variables of this type.)

The purpose of the present paper is to provide some 
probabilistic interpretations of this random variable, 
see Sections \ref{Sinv}--\ref{Spaths},
and to use these
interpretations to give new, and perhaps simpler, proofs of the following
results in 
\cite{BK}. 
We use $\dto$ for convergence in distribution and (later) $\eqd$ for
equality in distribution. $N(\mu,\gss)$ is the normal distribution with mean
$\mu$ and variance $\gss$.

\begin{theorem}[\cite{BK}]
  \label{Tmean}
The random variable $\gnm$ has mean and variance
\begin{align}
  \E\gnm&=\frac{n(n-1)}4\cdot\frac{m-1}{m},\label{tmean1}\\
  \Var\gnm&=\frac{n(n-1)(2n+5)}{72}\cdot\frac{m^2-1}{m^2}.\label{tmean2}
\end{align}
\end{theorem}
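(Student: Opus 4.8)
The plan is to base everything on the inversion interpretation of $\gnm$ that is set up in \refS{Sinv}: if $W=(W_1,\dots,W_n)$ has independent letters $W_i$ uniform on $\setm$, then MacMahon's formula for the $q$-multinomial coefficient gives $\gggmn(q)=m^{-n}\sum_{w}q^{\inv(w)}=\E q^{\inv(W)}$, so that $\gnm\eqd\inv(W)$. The decisive reduction is then to write the number of inversions as a sum of $\binom n2$ Bernoulli indicators,
\begin{equation*}
  \inv(W)=\sumijn I\ijx,\qquad I\ijx\=\ett{W_i>W_j},
\end{equation*}
after which both moments become elementary computations with two or three of the i.i.d.\ letters. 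The mean is immediate: since $W_i,W_j$ are independent and uniform, $\P(W_i=W_j)=1/m$, whence $\E I\ijx=\P(W_i>W_j)=(m-1)/(2m)=:p$ by symmetry, and multiplying by $\binom n2=n(n-1)/2$ yields \eqref{tmean1}.

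The variance is the substantive part. I would expand
\begin{equation*}
  \Var\inv(W)=\sumijn\Var I\ijx+\sum_{(i,j)\neq(k,l)}\Cov\bigpar{I\ijx,I_{kl}},
\end{equation*}
where $\Var I\ijx=p(1-p)$ contributes $\binom n2 p(1-p)$. Two index pairs that are disjoint involve disjoint independent letters, so their indicators are independent and contribute nothing; the only surviving covariances come from pairs sharing exactly one index, i.e.\ from triples $a<b<c$. Writing $U=W_a$, $V=W_b$, $Z=W_c$, the three edges $(a,b),(a,c),(b,c)$ produce the covariances of $\ett{U>V},\ett{U>Z},\ett{V>Z}$ taken pairwise. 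By reversal symmetry the ``shared-smaller'' and ``shared-larger'' cases coincide, so only two joint expectations must be evaluated, and conditioning on the shared letter and summing over its value gives
\begin{equation*}
  \E\bigsqpar{\ett{U>V}\ett{U>Z}}=\frac1{m^3}\sum_{u=1}^m(u-1)^2=\frac{(m-1)(2m-1)}{6m^2},
\end{equation*}
\begin{equation*}
  \E\bigsqpar{\ett{U>V}\ett{V>Z}}=\frac1{m^3}\sum_{v=1}^m(m-v)(v-1)=\frac{(m-1)(m-2)}{6m^2}.
\end{equation*}

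The remaining work, and the place where care is needed, is assembling these pieces with the correct multiplicities. Each of the $\binom n3$ triples contributes all three covariances, and since the double sum is over ordered pairs of distinct edges each is doubled; subtracting $3p^2$ from the sum of the three joint expectations, the per-triple covariance total collapses to $\frac{m^2-1}{12m^2}$, so the covariance contribution is $2\binom n3\cdot\frac{m^2-1}{12m^2}$. Adding the diagonal term $\binom n2 p(1-p)=\binom n2\frac{m^2-1}{4m^2}$ and factoring out $(m^2-1)/m^2$, the bracket $\frac18+\frac{n-2}{36}=\frac{2n+5}{72}$ appears and produces \eqref{tmean2}. I expect the only genuine obstacle to be bookkeeping: correctly counting how many ordered edge-pairs of each of the three types a triple yields, and not slipping in the final algebraic simplification; the probabilistic content itself is entirely routine once the inversion model is in hand.
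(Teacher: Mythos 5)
Your proof is correct: every expectation and every multiplicity checks out, and the final assembly
$\binom n2\frac{m^2-1}{4m^2}+2\binom n3\frac{m^2-1}{12m^2}
=\frac{n(n-1)(2n+5)}{72}\cdot\frac{m^2-1}{m^2}$
is right. The route differs from the paper's in one genuine respect: you work with the asymmetric inversion sum $V\nm=\sumijn\ett{W_i>W_j}$, i.e.\ the representation \eqref{V}, whereas the paper first symmetrizes, introducing auxiliary uniform variables $Y_i$ and the symmetric kernel $\ixij=I\ijx+I\jix$ with $I\ijx=\ett{X_i>X_j}\ett{Y_i<Y_j}$, so that $\gnm\eqd\sumijn\ixij$ is a genuine $U$-statistic. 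With the symmetric kernel there is a single covariance $\Cov(I^*_{12},I^*_{13})$ to evaluate, entering with multiplicity $n(n-1)(n-2)$; in your version each triple contributes three covariance types, two of which coincide by reversal symmetry, so you must carry out exactly the edge-pair bookkeeping that the symmetrization is designed to avoid (you do carry it out correctly: the per-triple total $\frac{m^2-1}{12m^2}+\bigpar{-\frac{m^2-1}{12m^2}}+\frac{m^2-1}{12m^2}=\frac{m^2-1}{12m^2}$ and the factor $2$ for ordered edge-pairs are both right). The underlying computations are the same in both treatments --- the sums $\sum_u(u-1)^2$ and $\binom m3$ that appear in your two joint expectations occur verbatim in the paper's evaluation of $\E\bigpar{I^*_{12}I^*_{13}}$. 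The paper explicitly notes, at the start of \refS{Spf} and in \refR{Rvdecomp}, that the argument can be run on \eqref{V} instead of the symmetrized form; your proof is essentially that acknowledged alternative, marginally more elementary (no auxiliary randomization) at the price of a three-case covariance analysis.
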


\begin{theorem}[\cite{BK}]
  \label{Tlimm}
If $\mtoo$ with $n\ge 1$ fixed, then
\begin{equation}
\gnm\dto G_n,
\end{equation}
where $G_n$ is the number of inversions in a random permutation of
\setn.
\end{theorem}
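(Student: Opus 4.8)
The plan is to exploit the probabilistic interpretation of $\gnm$ as the number of inversions of a random word, which reduces the statement to the observation that a short word over a large alphabet almost surely has distinct letters.

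First I would record the classical fact (see \citet{Andrews}) that the $q$-multinomial coefficient is the inversion-generating polynomial of the rearrangements of a multiset: for $k_1+\dots+k_m=n$,
\begin{equation*}
\binom{n}{\kkm}_q=\sum_{w}q^{\inv(w)},
\end{equation*}
the sum being over all words $w\in\setm^n$ having exactly $k_i$ letters equal to $i$, where $\inv(w)=\#\set{(s,t):s<t,\ w_s>w_t}$. Summing \eqref{ggmn} over all contents $(\kkm)$ then gives $\ggmnq=\sum_{w\in\setm^n}q^{\inv(w)}$, so by \eqref{pgf}
\begin{equation*}
\gggmnq=m^{-n}\sum_{w\in\setm^n}q^{\inv(w)}=\E q^{\inv(W)},
\end{equation*}
where $W=(W_1,\dots,W_n)$ has \iid{} letters uniform on \setm. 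Thus $\gnm\eqd\inv(W)$. Since both $\inv(W)$ and $G_n$ take values in $\set{0,1,\dots,\binom n2}$, it suffices to prove pointwise convergence of the probability mass functions.

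Now let $D$ be the event that $W_1,\dots,W_n$ are distinct. As $n$ is fixed, for $m\ge n$,
\begin{equation*}
\P(D)=\prod_{i=0}^{n-1}\Bigpar{1-\frac im}\longrightarrow1 \quad\text{as }\mtoo.
\end{equation*}
Conditioned on $D$, the vector $W$ is uniform over the injective $n$-tuples from \setm, so its rank pattern is a uniformly random permutation of \setn; as $\inv$ depends only on the relative order of the letters, $\inv(W)$ conditioned on $D$ has exactly the law of the inversion number of a uniform permutation, \ie{} of $G_n$. Hence, for each integer $k$,
\begin{equation*}
\P\bigpar{\inv(W)=k}=\P(G_n=k)\,\P(D)+\P\bigpar{\inv(W)=k,\,D^c}\longrightarrow\P(G_n=k),
\end{equation*}
because $\P(D)\to1$ while $\P(\inv(W)=k,\,D^c)\le\P(D^c)\to0$. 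This yields $\gnm\dto G_n$.

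The entire content thus lies in the word interpretation; the limit itself is the elementary statement that $W$ looks like a random permutation once $m$ is large, and the only step needing a little care is the first identity, which is nevertheless entirely standard. As a cross-check one can pass to the limit directly in \eqref{ggmn}: grouping the compositions according to the multiset $\lambda$ of their nonzero parts, the number of compositions giving a fixed $\lambda$ with $r$ parts is of order $m^r$, and each contributes the $m$-independent polynomial $[n]!_q/\prod_i[\lambda_i]!_q$; since $r\le n$ with equality only for $\lambda=(1^n)$, dividing by $m^n$ kills every shape except $(1^n)$, which leaves $[n]!_q/n!$, the \pgf{} of $G_n$.
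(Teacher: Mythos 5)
Your proposal is correct and follows essentially the same route as the paper: represent $\gnm$ as $\inv(\wnm)$ for a uniformly random word (the paper's Theorem~\ref{Tinv}), observe that conditioned on the letters being distinct the inversion count has the law of $G_n$, and note that the probability of a repeated letter tends to $0$ as $\mtoo$ with $n$ fixed. The only cosmetic difference is that you compute $\P(D)=\prod_{i=0}^{n-1}(1-i/m)$ exactly where the paper uses the union bound $\binom n2/m$; both give the same conclusion.
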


\begin{theorem}[\cite{BK}]
  \label{Tlimn}
If $\ntoo$ with $m\ge 2$ fixed, then
\begin{equation}\label{tlimn1}
  \frac{\gnm-\E\gnm}{\Var(\gnm)\qq}\dto N(0,1);
\end{equation}
equivalently,
\begin{equation}\label{tlimn2}
  \frac{\gnm-\E\gnm}{n\qqc}\dto N\Bigpar{0,\frac{m^2-1}{36m^2}}.
\end{equation}
\end{theorem}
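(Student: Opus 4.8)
The plan is to exploit the representation of $\gnm$ as the number of inversions in a uniformly random word, established in \refS{Sinv}: we may realize
\[ \gnm\eqd\sumijn X\ijx,\qquad X\ijx\=\ett{W_i>W_j}, \]
where $W_1,\dots,W_n$ are \iid{} and uniform on $\setm$. Since $\P(W_i>W_j)=\frac{m-1}{2m}$, this immediately reproduces $\E\gnm$ in \eqref{tmean1}; and since \eqref{tmean2} gives $\Var(\gnm)\sim\frac{m^2-1}{36m^2}\,n^3$, the two normalizations \eqref{tlimn1} and \eqref{tlimn2} are equivalent, so it suffices to prove \eqref{tlimn2}.

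First I would form the Hoeffding (projection) decomposition of the antisymmetric kernel sum $\sumijn X\ijx$. For $i<j$ the one-variable conditional expectations are $\E[X\ijx\mid W_i=w]=(w-1)/m$ and $\E[X\ijx\mid W_j=w]=(m-w)/m$; crucially these deviate from the mean only by opposite signs, so that with
\[ U_k\=\tfrac1m\bigpar{W_k-\tfrac{m+1}2} \]
(the $U_k$ being \iid, centered, with $\Var U_k=\frac{m^2-1}{12m^2}$) the linear part of the decomposition collapses to
\[ \hat G\=\E\gnm+\sum_{k=1}^n(n-2k+1)U_k, \]
because position $k$ acts as the smaller index in $n-k$ pairs and as the larger index in $k-1$ pairs.

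Next I would apply a classical CLT to $\hat G$. Its summands are independent and bounded by $O(n)$, and
\[ \Var(\hat G)=\Var(U_1)\sum_{k=1}^n(n-2k+1)^2=\frac{m^2-1}{12m^2}\cdot\frac{n(n^2-1)}{3}\sim\frac{m^2-1}{36m^2}\,n^3, \]
which matches the leading term of \eqref{tmean2}. Lyapunov's condition holds trivially, since $\sum_{k=1}^n\E\bigabs{(n-2k+1)U_k}^3=O(n^4)=o\bigpar{n^{9/2}}$, so $\bigpar{\hat G-\E\gnm}/n\qqc\dto N\bigpar{0,\frac{m^2-1}{36m^2}}$.

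Finally I would show that the quadratic remainder is negligible. Writing $\tilde X\ijx\=X\ijx-\E[X\ijx\mid W_i]-\E[X\ijx\mid W_j]+\E X\ijx$, one has $\gnm-\hat G=\sumijn\tilde X\ijx$. These doubly centered terms are pairwise orthogonal --- two distinct pairs share at most one index, and conditioning on the shared letter annihilates the covariance because $\E[\tilde X\ijx\mid W_i]=\E[\tilde X\ijx\mid W_j]=0$ --- and each is bounded, so $\Var\bigpar{\gnm-\hat G}=\sumijn\Var(\tilde X\ijx)=O(n^2)=o(n^3)$. Hence $\bigpar{\gnm-\hat G}/n\qqc\pto0$, and \eqref{tlimn2} follows from the limit for $\hat G$ by Slutsky's theorem. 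The main obstacle is really just the bookkeeping forced by the asymmetry of $X\ijx$: one must keep separate track of a letter's role as the left versus the right member of a pair, and verify the orthogonality of the second-order terms; once the projection is in place, the independence of the $U_k$ renders the CLT and the remainder bound routine.
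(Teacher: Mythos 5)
Your proof is correct, but it takes a genuinely different route from the paper's main argument --- in fact it is essentially the alternative decomposition that the paper sketches in \refR{Rvdecomp} without carrying it out as the primary proof. The paper's proof of \refT{Tlimn} first \emph{symmetrizes}: it introduces auxiliary \iid{} uniform variables $Y_i$ and represents $\gnm$ as the symmetric $U$-statistic $U_{n,m}=\sum_{i<j}h^*(Z_i,Z_j)$ with $Z_i=(X_i,Y_i)$ (\refS{SU}), so that the Hoeffding projection produces a linear part $(n-1)\sum_{i}\xi_i$ with genuinely \iid{} summands $\xi_i=-\frac{2}{m}X_i'Y_i'$, to which the classical CLT for \iid{} sums (or Hoeffding's theorem for $U$-statistics) applies with no moment-condition bookkeeping. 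You instead work directly with the asymmetric inversion count $\sum_{i<j}\ett{W_i>W_j}$; the projection then yields the weighted sum $\sum_{k}(n-2k+1)U_k$ of independent but \emph{not} identically distributed terms, and you correctly compensate by verifying Lyapounov's condition (your computation $\sum_k(n+1-2k)^2=n(n^2-1)/3$ and the resulting variance $\frac{m^2-1}{36m^2}n(n^2-1)$ agree with the paper's \eqref{ql}--\eqref{vxi} and with \eqref{tmean2}). Your treatment of the remainder --- pairwise orthogonality of the doubly centered terms via vanishing conditional expectations given the shared letter, hence variance $O(n^2)=o(n^3)$, then Chebyshev and Slutsky --- is exactly the paper's estimate \eqref{vvx}. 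The trade-off: your route is more self-contained, needing no auxiliary randomization, at the price of the Lyapounov check for a non-identically-distributed triangular array; the paper's symmetrization buys an \iid{} linear part and lets the identical decomposition handle \refT{Tlimnm} (where $m\to\infty$ as well), though your argument also extends to that case since your $U_k$ are uniformly bounded.
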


Furthermore, we can also let both $m$ and $n$ tend to infinity; we show
that there are 
no surprises in this case.

\begin{theorem}
  \label{Tlimnm}
If $m,\ntoo$, then
\begin{equation}\label{tlimnm1}
  \frac{\gnm-\E\gnm}{\Var(\gnm)\qq}\dto N(0,1);
\end{equation}
equivalently,
\begin{equation}\label{tlimnm2}
  \frac{\gnm-\E\gnm}{n\qqc}\dto N\Bigpar{0,\frac{1}{36}}.
\end{equation}
\end{theorem}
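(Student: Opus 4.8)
The plan is to isolate the leading, independent part of $\gnm$ by a first-order projection and to run a classical CLT on it, arranging every estimate so that it is uniform in $m$. By the interpretation developed in \refS{Sinv}, I realise $\gnm$ as $\inv(W)=\sum_{1\le i<k\le n}\ett{W_i>W_k}$, where $W=(W_1,\dots,W_n)$ has \iid{} entries uniform on $\set{1,\dots,m}$. Let $h_j(w)\=\E[\gnm\mid W_j=w]-\E\gnm$ be the H\'ajek projection of (the centred) $\gnm$ onto the $j$th coordinate. Splitting the defining sum into the pairs meeting $j$ and those avoiding it, a direct evaluation of the pair probabilities gives the explicit form
\begin{equation*}
 h_j(w)=(2j-1-n)\,\frac{m+1-2w}{2m},
\end{equation*}
so that $\hat G\=\sumjn h_j(W_j)$ is a sum of \emph{independent}, centred, bounded random variables. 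Since $\E\bigpar{m+1-2W_j}=0$ and $\Var\bigpar{m+1-2W_j}=4\Var(W_j)=(m^2-1)/3$, one computes
\begin{equation*}
 \Var\hat G=\sumjn(2j-1-n)^2\,\frac{m^2-1}{12m^2}=\frac{n(n^2-1)(m^2-1)}{36m^2},
\end{equation*}
using $\sum_{j=1}^n(2j-1-n)^2=n(n^2-1)/3$.

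First I would apply the Lindeberg--Feller theorem to $\hat G$. The point is that uniformity in $m$ is free here: one has the deterministic bound $\abs{h_j(W_j)}\le(n-1)\frac{m-1}{2m}\le n/2$, while for every $m\ge2$, $\Var\hat G\ge\frac34\cdot\frac{n(n^2-1)}{36}=\frac{n(n^2-1)}{48}$. Hence $\max_j\abs{h_j(W_j)}^2/\Var\hat G=O(1/n)$, uniformly in $m$, so that the Lindeberg truncations are eventually empty and $\hat G/(\Var\hat G)\qq\dto N(0,1)$ as $m,\ntoo$.

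Next I would show that the remainder $R\=\gnm-\E\gnm-\hat G$ is asymptotically negligible. Because the $W_j$ are independent, the centred coordinate subspaces are mutually orthogonal and $R\perp\hat G$; thus $\Var R=\Var\gnm-\Var\hat G$. Combining the formula above with the variance in \refT{Tmean} yields the identity
\begin{equation*}
 \frac{\Var\hat G}{\Var\gnm}=\frac{2(n+1)}{2n+5},\qquad\text{equivalently}\qquad\frac{\Var R}{\Var\gnm}=\frac{3}{2n+5},
\end{equation*}
in which the common factor $(m^2-1)/m^2$ has cancelled. Consequently $R/(\Var\gnm)\qq\pto0$ and $\Var\hat G/\Var\gnm\to1$, both uniformly in $m$, so Slutsky's theorem gives \eqref{tlimnm1}; the equivalent form \eqref{tlimnm2} then follows from $\Var\gnm\sim n^3/36$ as $m,\ntoo$, read off from \eqref{tmean2}.

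The only step at which the joint regime could conceivably misbehave is the negligibility of $R$, i.e.\ the assertion that the first-order projection captures all of the variance in the limit and does so uniformly in $m$. The computation above shows why this is automatic and why the theorem holds with ``no surprises'': the entire $m$-dependence of both $\Var\hat G$ and $\Var\gnm$ sits in the single factor $(m^2-1)/m^2$, which cancels in the ratio, leaving the $m$-free bound $3/(2n+5)\to0$ to control all values of $m$ at once. The same argument, specialised to fixed $m\ge2$, reproves \refT{Tlimn}.
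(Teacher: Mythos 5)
Your proof is correct, but it follows the alternative route that the paper only sketches in \refR{Rvdecomp}, not the route of the paper's actual proof. The paper symmetrizes first: it introduces auxiliary uniform variables $Y_i$ to turn the asymmetric inversion count into a genuine $U$-statistic $U\nm=\sum_{i<j}h^*(Z_i,Z_j)$, and then applies Hoeffding's projection, whose linear part $(n-1)\sum_i\xi_i$ has \iid{} summands $\xi_i=-\tfrac2mX_i'Y_i'$; the classical \iid{} CLT then gives \refT{Tlimn} for free, and only the regime $\mtoo$ needs Lyapounov. You instead take the H\'ajek projection of the asymmetric sum $\sum_{i<k}\ett{W_i>W_k}$ directly, getting the linear part $\sum_j(2j-1-n)\tfrac{m+1-2W_j}{2m}$ with non-identically distributed summands, so you need Lindeberg--Feller even for fixed $m$ --- but the deterministic bound $|h_j(W_j)|\le n/2$ against $\Var\hat G\gtrsim n^3$ makes that condition vacuous, uniformly in $m$. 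Your treatment of the remainder is actually tidier than the paper's: the exact identity $\Var R/\Var\gnm=3/(2n+5)$, with the factor $(m^2-1)/m^2$ cancelling, isolates precisely why the joint limit holds with no constraint relating $m$ and $n$, whereas the paper settles for the crude bound $\Var\eta\ijx\le1$ giving $\E(n^{-3/2}R)^2\to0$. The computations I checked (the formula for $h_j$, $\sum_j(2j-1-n)^2=n(n^2-1)/3$, the variance of $\hat G$ matching the linear term in \eqref{vv}, and the orthogonality $R\perp\hat G$ from independence of the coordinates) are all right, and the passage from \eqref{tlimnm1} to \eqref{tlimnm2} via $\Var\gnm\sim n^3/36$ is as in the paper.
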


Moreover, we show a local limit theorem strengthening Theorems
\refand{Tlimn}{Tlimnm}.

\begin{theorem}\label{Tloc}
If \ntoo, then, with $\mu\nm\=\E\gnm$ and $\gss\nm\=\Var\gnm$ given by
\refT{Tmean}, 
\begin{equation}\label{tloc}
\gs\nm  \P(\gnm=k) =\frac1{\sqrt{2\pi}} e^{-(k-\mu\nm)^2/2\gss\nm}+o(1),
\end{equation}
uniformly in all $m\ge2$ and $k\in\bbZ$.

Equivalently, 
we can in \eqref{tloc} replace $\mu\nm$ and $\gss\nm$ by
the approximations
$\bmu\nm\=\frac{m-1}{4m}n^2$
and $\bgss\nm\=\frac{m^2-1}{36m^2}n^3$.
\end{theorem}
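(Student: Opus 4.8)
The plan is to prove \eqref{tloc} by Fourier inversion, reducing the local limit theorem to a uniform estimate on the characteristic function of $\gnm$. Write $\gf\nm(\theta)\=\E\,e^{\ii\theta(\gnm-\mu\nm)}$ for the centered characteristic function. Since $\gnm$ is integer valued with span $1$ (for $n\ge2$ it takes the values $0$ and $1$ with positive probability), the inversion formula gives $\P(\gnm=k)=\frac1{2\pi}\int_{-\pi}^{\pi}e^{-\ii\theta(k-\mu\nm)}\gf\nm(\theta)\dd\theta$, while the \rhs{} of \eqref{tloc} equals $\frac{\gs\nm}{2\pi}\int_{-\infty}^{\infty}e^{-\ii\theta(k-\mu\nm)}e^{-\gss\nm\theta^2/2}\dd\theta$. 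Subtracting and using $\abs{e^{-\ii\theta(k-\mu\nm)}}\le1$, it suffices to show
\begin{equation}
\gs\nm\int_{-\pi}^{\pi}\Bigabs{\gf\nm(\theta)-e^{-\gss\nm\theta^2/2}}\dd\theta+\gs\nm\int_{\abs\theta>\pi}e^{-\gss\nm\theta^2/2}\dd\theta\to0
\end{equation}
as \ntoo, uniformly in $m\ge2$. The crucial feature is that this bound is independent of $k$, so uniformity over all $k\in\bbZ$ is automatic; note also that by \refT{Tmean} we have $\gss\nm\asymp n^3$, hence $\gs\nm\asymp n^{3/2}$, uniformly for $m\ge2$, since $\tfrac34\le\frac{m^2-1}{m^2}<1$. (The second integral is negligible since $\gss\nm\asymp n^3$.)

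On the central range $\abs\theta\le n^{-1-\eps}$ ($\eps>0$ small and fixed) I would use a cumulant expansion. In the random-word representation of \refS{Sinv}, where $\gnm\eqd\inv(W)$, we have $\gnm-\mu\nm=\sum_{i<j}\bigpar{\ett{W_i>W_j}-p}$ with $p=\P(W_i>W_j)$, a sum of bounded indicators; a standard counting of connected index configurations gives $\kappa_j(\gnm)=O(n^{j+1})$ for each $j\ge2$, uniformly in $m$, with $\kappa_2=\gss\nm$. Hence $\log\gf\nm(\theta)=-\tfrac12\gss\nm\theta^2+\sum_{j\ge3}\kappa_j\frac{(\ii\theta)^j}{j!}$, whose remainder tends to $0$ uniformly on this range, so $\gf\nm(\theta)=e^{-\gss\nm\theta^2/2}(1+o(1))$ there; this reproduces the Gaussian main term after multiplication by $\gs\nm$ and integration, with error $o(1)$. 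At the endpoint $\abs\theta=n^{-1-\eps}$ one already has $\gss\nm\theta^2\asymp n^{1-2\eps}\to\infty$, so $\abs{\gf\nm}$ is super-polynomially small.

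It remains to bound $\int_{n^{-1-\eps}\le\abs\theta\le\pi}\abs{\gf\nm(\theta)}\dd\theta=o(1/\gs\nm)$, and this is the main obstacle. For $\theta$ bounded away from $0$ there is a clean uniform bound by pairing: grouping the positions of $W$ into blocks $\set{2\ell-1,2\ell}$ and conditioning on their unordered contents (and on $W_n$ if $n$ is odd), one checks that reversing the order inside one block changes only that block's own inversion indicator, all cross-block and block-versus-singleton contributions being symmetric in the two letters of the block. Thus conditionally $\inv(W)$ is a constant plus independent $\Be(1/2)$ bits, one per block with two distinct letters, and taking moduli yields
\begin{equation}
\bigabs{\gf\nm(\theta)}\le\E\bigsqpar{\abs{\cos(\theta/2)}^{D}}=\Bigpar{1-\Bigpar{1-\tfrac1m}\bigpar{1-\abs{\cos(\theta/2)}}}^{\floor{n/2}}\le\Bigpar{1-\tfrac12\bigpar{1-\abs{\cos(\theta/2)}}}^{\floor{n/2}},
\end{equation}
where $D=\#\set{\ell:W_{2\ell-1}\ne W_{2\ell}}\sim\Bin\bigpar{\floor{n/2},1-1/m}$ and $m\ge2$. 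For $\delta\le\abs\theta\le\pi$ this is $(1-c_\delta)^{\floor{n/2}}$ with $c_\delta>0$, exponentially small uniformly in $m$, which settles that range. However, on the intermediate range $n^{-1-\eps}\le\abs\theta\le\delta$ the pairing bound only gives $\exp(-cn\theta^2)$, which is far too weak because it captures merely the $O(n)$ part of the variance, not its true order $n^3$; against the factor $\gs\nm\asymp n^{3/2}$ this fails. To obtain a bound of Gaussian strength I would exploit the factorization $\binom{n}{\kkm}_q=\prod_{r=1}^{m-1}\binom{N_r}{k_r}_q$ with $N_r=k_r+\dots+k_m$, which, conditionally on the content $(\kkm)$ of $W$, exhibits $\inv(W)$ as a sum of \emph{independent} contributions $A_1,\dots,A_{m-1}$, where $A_r$ is the inversion statistic of a random arrangement of $k_r$ against $N_r-k_r$ letters and has characteristic function $\binom{N_r}{k_r}_q/\binom{N_r}{k_r}$ at $q=e^{\ii\theta}$. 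Each such $q$-binomial characteristic function is, up to the conditioning on the number of ones, a trigonometric product of the type $\prod_i\abs{\cos(i\theta/2)}$ familiar from the local limit theorem for inversions of a random permutation, and should admit a bound $\exp\bigpar{-c\min(n^3\theta^2,n)}$ on $\abs\theta\le\delta$ for the dominant (balanced) contents; this integrates against $\gs\nm$ to $o(1)$ on the intermediate range. The delicate points, and the real work, are to make this bound uniform in $m$ (for $m=2$ there is a single factor, so everything rests on the $q$-binomial estimate together with the conditioning on $k$) and to control the atypical unbalanced contents.

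Finally, the equivalence with $\bmu\nm=\frac{m-1}{4m}n^2$ and $\bgss\nm=\frac{m^2-1}{36m^2}n^3$ is elementary: by \refT{Tmean}, $\gss\nm/\bgss\nm=1+O(1/n)$ and $\mu\nm-\bmu\nm=O(n)=o(\gs\nm)$, both uniformly in $m$, so $\gs\nm/\bgs\nm\to1$; using that $\gs\nm\P(\gnm=k)=O(1)$ uniformly (a by-product of the estimate above) together with the uniform continuity and boundedness of $x\mapsto e^{-x^2/2}$, one may replace $(\mu\nm,\gss\nm)$ by $(\bmu\nm,\bgss\nm)$ in \eqref{tloc} at the cost of an extra $o(1)$.
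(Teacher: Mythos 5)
Your overall architecture (Fourier inversion, splitting the frequency range, reducing everything to a uniform characteristic--function bound independent of $k$) matches the paper's, and several pieces are correct and even attractive: the pairing/blocking argument giving $\abs{\gf\nm(\theta)}\le\bigpar{1-\tfrac12(1-\abs{\cos(\theta/2)})}^{\floor{n/2}}$ is a genuinely elementary way to get the uniform bound $e^{-cn}$ for $\abs\theta\ge\delta$, and the final reduction from $(\mu\nm,\gss\nm)$ to $(\bmu\nm,\bgss\nm)$ is fine. But there is a genuine gap exactly where you flag it: the intermediate range $n^{-1-\eps}\le\abs\theta\le\delta$. There the proof needs a bound of Gaussian strength, roughly $\abs{\gggmn(\eith)}\le e^{-c\min(n^3\theta^2,\,n)}$ uniformly in $m\ge2$, and your pairing bound $e^{-cn\theta^2}$ is short by a factor $n^2$ in the exponent, so multiplied by $\gs\nm\asymp n^{3/2}$ it is useless. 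You propose to recover the missing strength from the factorization of the $q$-multinomial into $q$-binomials conditionally on the content, which is the right idea, but you do not carry it out, and you yourself identify the uniformity in $m$ and the control of unbalanced contents as ``the real work.'' That work is the heart of the theorem; without it the proof is incomplete. (The paper isolates precisely this estimate as \refL{LGW} and proves it by conditioning on the letter counts, importing the conditional chf bounds for $\binom{n}{n_1,\dots,n_m}_q$ from \cite{SJ239}, and disposing of unbalanced contents with a Chernoff bound $\P(N^*>3n/4)\le 3e^{-cn}$; note that the relevant variance lower bound is $\gss\ge n^2 n_*/36$, so balancedness of the content is exactly what is needed.)

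Two secondary points. First, your central-range argument as stated does not close: with $\kappa_j=O(n^{j+1})$ the cubic term of the cumulant expansion is $O(n^4\abs\theta^3)=O(n^{1-3\eps})$ on $\abs\theta\le n^{-1-\eps}$, which does not tend to $0$ for ``small'' $\eps$; you need $\eps>1/3$, which further widens the problematic intermediate range. The paper avoids cumulants altogether: it takes the already-proved central limit theorem (Theorems \ref{Tlimn} and \ref{Tlimnm}), which gives pointwise convergence of the rescaled characteristic function, and then concludes by dominated convergence with \refL{LGW} supplying the integrable majorant $2e^{-c\theta^2}+e^{-c\theta^{2/3}}$ on the whole range $\abs\theta\le\pi\gs\nm$. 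Second, in your pairing computation the exponent base should be checked against $m=2$ (where $1-1/m=1/2$), but that part is correct; it simply cannot be pushed into the intermediate range.
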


Proofs are given in Sections \ref{Spf}--\ref{Spfloc}.

\begin{remark}
  The name (generalized) Galois numbers comes from the following algebraic
  interpretation, see
\cite{GR},
\cite{Vinroot},
\cite[Chapter 7]{KacCheung}, 
\cite[Proposition 1.3.18]{StanleyI} 
which, however, not will be important in the present paper.

If $q$ is a prime power and $V$ an $n$-dimensional vector space over the
Galois field $F_q$ with $q$ elements, then it is not difficult to see that 
$\binom{n}{\kkm}_q$ is
the number of \emph{flags} $\set0\subseteq V_1\subseteq\dots\subseteq
V_m=V$, where $V_i$ is a subspace of dimension $k_1+\dots+k_i$.
Hence, $\ggmnq$ is the total number of such flags of fixed length $m$ in
$V=F_q^n$.
In particular, the Galois number $\ggxn2(q)$ is the number of subspaces of
$F_q^n$.
\end{remark}

\section{Inversions}\label{Sinv}
If $w=w_1\dotsm w_n$ is a word with letters from an ordered alphabet $\cA$,
then the number of
\emph{inversions} in $w$ is the number of pairs $(i,j)$ with $ i<j$
and $w_i> w_j$; we denote this number by $\inv(w)$.
Using the notation $\ett{\cE}$ for the indicator of an event $\cE$, we thus
have
\begin{equation}\label{inv}
  \inv(w)=\sum_{1\le i<j\le n}\ett{w_i>w_j}.
\end{equation}
With the alphabet $\cA=\set{1,\dots,m}$, 
it is well-known (and not difficult to see) that the $q$-multinomial
coefficient $\binom n{\nnm}_q$,
where $n_1+\dots+n_m=n$,  
is the generating function of the number of
inversions in words consisting of $n_1$ 1's, \dots, $n_m$ $m$'s, 
in the
sense that
if $a_\nnm(\ellj)$ is the number of such words with exactly $\ellj$
inversions, then
\begin{equation}\label{invq}
  \binom n{\nnm}_q = \sum_{\ellj=0}^\infty a_\nnm(\ellj) q^\ellj,
\end{equation}
see \cite[Theorem 3.6]{Andrews}.

Summing over all $\nnm$ with $n_1+\dots+n_m=n$, we immediately obtain the
following from \eqref{ggmn} and \eqref{invq}.

\begin{theorem}
  \label{Tinv0}
$\ggmnq$
is the generating function of the number of
inversions in words of length $n$ in the alphabet $\setm$, 
in the sense that
if $A_n\mm(\ellj)$ is the number of such words with exactly $\ellj$
inversions, then
\begin{equation}\label{tinv0}
\ggmnq = \sum_{\ellj=0}^\infty A_n\mm(\ellj) q^\ellj.
\hfill
\end{equation}
\vskip-\baselineskip
\qed
\end{theorem}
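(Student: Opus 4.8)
The plan is to prove the identity by partitioning the set of all words of length $n$ over the alphabet $\setm$ according to their \emph{content}, that is, according to the number of occurrences of each letter. First I would observe that every such word uses some number $n_i$ of copies of the letter $i$, with $n_1+\dots+n_m=n$, and that this content is uniquely determined by the word. Since $\inv(w)$ depends only on the word $w$, the words with exactly $\ellj$ inversions can be counted class by class: with $a_\nnm(\ellj)$ as in \eqref{invq} and $A_n\mm(\ellj)$ as in the statement, this gives the elementary partition identity
\begin{equation*}
  A_n\mm(\ellj)=\sum_{n_1+\dots+n_m=n} a_\nnm(\ellj).
\end{equation*}

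Next I would pass to generating functions. Multiplying this identity by $q^\ellj$, summing over $\ellj\ge0$, and interchanging the two sums yields
\begin{equation*}
  \sum_{\ellj=0}^\infty A_n\mm(\ellj)q^\ellj
  =\sum_{n_1+\dots+n_m=n}\ \sum_{\ellj=0}^\infty a_\nnm(\ellj)q^\ellj
  =\sum_{n_1+\dots+n_m=n}\binom n{\nnm}_q,
\end{equation*}
where the last equality is \eqref{invq}. Comparing the final sum with the definition \eqref{ggmn} identifies the right-hand side as $\ggmnq$, which is the desired conclusion \eqref{tinv0}.

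I do not expect any genuine obstacle here: the whole argument rests on the single observation that the number of inversions is a content-preserving statistic, so that the multivariate result \eqref{invq} for each fixed content sums directly to the univariate statement. The only point deserving a word of justification is the interchange of summations, but this is immediate since for fixed $n$ each inner sum has at most $\binom n2+1$ nonzero terms and there are only finitely many tuples $(n_1,\dots,n_m)$ with $n_1+\dots+n_m=n$; indeed $\ggmnq$ is a polynomial in $q$ of degree $\binom n2$. Thus the theorem is, as the surrounding text indicates, an immediate consequence of \eqref{ggmn} and \eqref{invq}.
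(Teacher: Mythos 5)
Your proof is correct and is exactly the argument the paper intends: summing the identity \eqref{invq} over all contents $(n_1,\dots,n_m)$ with $n_1+\dots+n_m=n$ and comparing with \eqref{ggmn}. The paper treats this as immediate and gives no further detail, so your write-up (including the harmless remark about interchanging finite sums) matches the paper's approach.
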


By 
the definition of the random variable $\gnm$, \eqref{tinv0} is equivalent to
\begin{equation}
  \P(\gnm=\ellj)=A_n\mm(\ellj)/n^{-m}.
\end{equation}
This can be formulated as follows, yielding our first construction of a
random variable $\gnm$.
\begin{theorem}\label{Tinv}
Let $\wnm$ be a uniformly random word   of length $n$ in the alphabet $\setm$.
Then the number of inversions $\inv(\wnm)$ has the distribution $\cgnm$.
In other words, $\gnm\eqd\inv(\wnm)$.
\nopf
\end{theorem}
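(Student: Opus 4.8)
The plan is to read off the result directly from Theorem~\ref{Tinv0} together with the definition of the random variable $\gnm$ via its probability generating function \eqref{pgf}. The key observation is that a uniformly random word $\wnm$ of length $n$ in the alphabet $\setm$ takes each of the $m^n$ possible words with equal probability $m^{-n}$, so for each integer $\ellj\ge0$ we have
\begin{equation*}
  \P\bigpar{\inv(\wnm)=\ellj}=\frac{A_n\mm(\ellj)}{m^n},
\end{equation*}
where $A_n\mm(\ellj)$ is the number of words counted in \refT{Tinv0}.

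From here I would form the probability generating function of $\inv(\wnm)$ and identify it with $\gggmn$. Explicitly,
\begin{equation*}
  \E q^{\inv(\wnm)}
  =\sum_{\ellj=0}^\infty \P\bigpar{\inv(\wnm)=\ellj}\,q^\ellj
  =m^{-n}\sum_{\ellj=0}^\infty A_n\mm(\ellj)\,q^\ellj
  =m^{-n}\ggmnq
  =\gggmnq,
\end{equation*}
using \eqref{tinv0} for the third equality and \eqref{pgf} for the last. Since the probability generating function of $\inv(\wnm)$ coincides with the generating function $\gggmn$ defining the distribution $\cgnm$, and a distribution supported on the non-negative integers is determined by its probability generating function, we conclude $\inv(\wnm)\sim\cgnm$, i.e.\ $\gnm\eqd\inv(\wnm)$.

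There is essentially no obstacle here: the entire content has been front-loaded into \refT{Tinv0}, which translates the algebraic identity \eqref{ggmn}--\eqref{invq} into a combinatorial count of inversions. The only things to check are the bookkeeping that the uniform measure puts mass $m^{-n}$ on each word and that the normalization in \eqref{pgf} matches $\ggmn(1)=m^n$. Both are immediate, so the proof is a one-line verification that the two generating functions agree.
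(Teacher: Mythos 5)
Your proof is correct and follows exactly the route the paper takes: Theorem~\ref{Tinv0} plus the observation that the uniform measure assigns mass $m^{-n}$ to each word identifies the \pgf{} of $\inv(\wnm)$ with $\gggmn$, which is why the paper states the theorem without a separate proof. (Incidentally, your displayed probability $A_n\mm(\ellj)/m^n$ is the correct form of the paper's own display, which contains a typo reading $A_n\mm(\ellj)/n^{-m}$.)
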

We can thus choose $\gnm\=\inv(\wnm)$. (Recall that we have defined $\gnm$
to be an arbitrary random variable with the desired distribution.)

If we write  the random word $\wnm$ as $X_1\dotsm X_n$, we have
$X_1,\dots,X_n$ \iid{} (independent and identically distributed) with the
uniform distribution on \setm{}, 
and using  \eqref{inv}, \refT{Tinv} may be reformulated as follows. 

\begin{corollary}
  Let $\set{X_i}_{i=1}^\infty$ be \iid{} 
random variables, with every $X_i$ uniformly distributed on \setm{},
and let
\begin{equation}\label{V}
V\nm\=\sum_{1\le i<j\le n} \ett{X_{i}>X_{j}}.
\end{equation}
Then $V\nm\sim\cgnm$.
In other words, $\gnm\eqd V\nm$.
\nopf
\end{corollary}

Let $N_k\=\#\set{i\le n: X_i=k}$ be the number of occurences of the letter
$k$ in  the random string $\wnm=X_1\dotsm X_n$. Then $(N_1,\dots,N_m)$ has a
multinomial distribution with $\E N_k=n/m$, and it is well known that if we
keep $m$ fixed,
$n\qqw(N_k-E N_k)_{k=1}^m \dto (Z_k)_{k=1}^m$ as \ntoo,
where $Z_1,\dots,Z_m$ are jointly normal with means $\E Z_k=0$,
variances $\Var Z_k=(m-1)/m^2$ and covariances $\Cov(Z_k,Z_l)=-1/m^2$
($k\neq l$).
By \refT{Tlimn}, $V\nm\eqd\gnm$ has an asymptotic normal distribution, and
this extends to joint asymptotic normality of $V\nm$ and $N_1,\dots,N_m$.

\begin{theorem}
  \label{TV+}
For fixed $m$, as \ntoo,
\begin{equation*}
  \lrpar{\frac{V-\E V\nm}{n\qqc},\frac{N_1-E N_1}{n\qq},\dots,\frac{N_m-E N_m}{n\qq}}
\dto (Z^*,Z_1,\dots,Z_m),
\end{equation*}
where $Z^*,Z_1,\dots,Z_m$ are jointly normal with means $0$,
$\Var Z^*=(m^2-1)/36m^2$ as in \eqref{tlimn2}, $Z_1,\dots,Z_m$ have the
variances and covariances given above and $Z^*$ is independent of
$Z_1,\dots,Z_m$. 
\end{theorem}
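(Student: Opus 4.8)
The plan is to combine the Cram\'er--Wold device with the H\'ajek (linear) projection of the degree-$2$ U-statistic $V\nm=\sum_{i<j}\ett{X_i>X_j}$ onto the independent coordinates $X_1,\dots,X_n$. Fix reals $a,b_1,\dots,b_m$ and set
\begin{equation*}
 L_n \= a\,\frac{V\nm-\E V\nm}{n\qqc}+\sum_{k=1}^m b_k\,\frac{N_k-\E N_k}{n\qq};
\end{equation*}
it suffices to show $L_n\dto N(0,\gss)$ with the variance predicted by the claimed limit. Writing $p\=\P(X_1>X_2)=(m-1)/2m$, $f(x)\=\P(X'<x)=(x-1)/m$ and $h(x)\=\P(X'>x)=(m-x)/m$ for an independent uniform letter $X'$, the projection of $V\nm-\E V\nm$ onto $X_i$ is $\phi_i(X_i)=(n-i)\bigl(f(X_i)-p\bigr)+(i-1)\bigl(h(X_i)-p\bigr)$. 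The crucial simplification is $f+h\equiv(m-1)/m=2p$, whence $h-p=-(f-p)$ and
\begin{equation*}
 \phi_i(x)=(n+1-2i)\bigl(f(x)-p\bigr).
\end{equation*}
Thus $\hat V\=\sum_i\phi_i(X_i)$ is a sum of \emph{independent} centred terms, and $V\nm-\E V\nm=\hat V+R$ with $R$ orthogonal to $\hat V$. Since $\E f(X_1)=p$, one gets $\Var\hat V=\Var\bigl(f(X_1)\bigr)\sum_i(n+1-2i)^2=\frac{m^2-1}{12m^2}\cdot\frac{n(n^2-1)}{3}$; subtracting this from $\Var V\nm$ as given by \refT{Tmean} yields $\Var R=\frac{m^2-1}{24m^2}n(n-1)=O(n^2)$, so $R/n\qqc\to0$ in $L^2$ and $R$ may be discarded throughout.

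The asymptotic independence of $Z^*$ and the $Z_k$ is the heart of the statement, and it reduces to an exact cancellation. Because distinct $X_i$ are independent,
\begin{equation*}
 \Cov(\hat V,N_k)=\sum_{i=1}^n\Cov\bigl(\phi_i(X_i),\ett{X_i=k}\bigr)=\Cov\bigl(f(X_1),\ett{X_1=k}\bigr)\sum_{i=1}^n(n+1-2i),
\end{equation*}
and $\sum_{i=1}^n(n+1-2i)=0$, so the projection is \emph{exactly} uncorrelated with every $N_k$. The remainder contributes only $\abs{\Cov(R,N_k)}\le(\Var R\cdot\Var N_k)\qq=O(n\qqc)$ by \CS, and dividing by the normalisation $n\qqc\cdot n\qq=n^2$ gives $\Cov(V\nm,N_k)/n^2\to0$.

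It remains to run the one-dimensional CLT. After discarding $R$ we have $L_n=\sum_{i=1}^n\xi_{n,i}+\op(1)$, where
\begin{equation*}
 \xi_{n,i}\=\frac{a(n+1-2i)}{n\qqc}\bigl(f(X_i)-p\bigr)+n\qqw\sum_{k=1}^m b_k\bigl(\ett{X_i=k}-1/m\bigr)
\end{equation*}
is a function of $X_i$ alone; hence the $\xi_{n,i}$ are independent and mean zero, and are uniformly $O(n\qqw)$. Lyapunov's condition holds since $\sum_i\E\abs{\xi_{n,i}}^3=O(n\cdot n\qqcw)=O(n\qqw)\to0$, so $L_n\dto N(0,\gss)$ with $\gss=\lim_n\sum_i\Var\xi_{n,i}$. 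By the computation above the cross term in $\sum_i\Var\xi_{n,i}$ again carries the factor $\sum_i(n+1-2i)=0$ and vanishes; the two remaining pieces converge to $a^2\Var Z^*=a^2\frac{m^2-1}{36m^2}$ and to $\Var\bigl(\sum_{k=1}^m b_k\ett{X_1=k}\bigr)=\Var\bigl(\sum_{k=1}^m b_kZ_k\bigr)$, the one-letter variances $(m-1)/m^2$ and covariances $-1/m^2$ being exactly those of the $Z_k$. Thus $\gss=a^2\Var Z^*+\Var\bigl(\sum_{k=1}^m b_kZ_k\bigr)$, and Cram\'er--Wold delivers the joint normal limit with $Z^*$ independent of $(Z_1,\dots,Z_m)$.

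The only genuine obstacle is the independence, and the calculation shows it rests entirely on the identity $\sum_{i=1}^n(n+1-2i)=0$ (equivalently, on $f+h=2p$); beyond this, everything is routine manipulation of bounded independent summands, with the degenerate remainder $R$ controlled in $L^2$ through \refT{Tmean} and \CS.
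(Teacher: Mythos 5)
Your proposal is correct and follows essentially the same route as the paper: the H\'ajek projection $\sum_i(n+1-2i)\bigl(f(X_i)-p\bigr)=\frac1m\sum_i(n+1-2i)X_i'$ is exactly the decomposition \eqref{vv} of \refR{Rvdecomp}, the remainder is discarded by the same $L^2$ bound as in \eqref{vvx}, the independence of $Z^*$ from the $Z_k$ rests on the same identity $\sum_i(n+1-2i)=0$, and the conclusion is reached by the same Lyapounov-plus-Cram\'er--Wold argument. The only cosmetic difference is that you obtain $\Var R$ by subtracting the projection variance from \refT{Tmean} rather than computing $\Var\eta'\ijx$ directly.
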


The proof is given in \refS{Spf}.

\section{A $U$-statistic}\label{SU}

Let $\set{X_i}_{i=1}^\infty$ and $\set{Y_i}_{i=1}^\infty$ be independent
random variables, with every $X_i$ uniformly distributed on \setm{} and
every $Y_i$ uniformly distributed on \oi.
(Any common continuous distribution of $Y_i$ would yield the same result.)

Fix $n\ge1$. The values $Y_1,\dots,Y_n$ are \as{} distinct, and can thus be
ordered as $Y_{\gs(1)}<\dots<Y_{\gs(n)}$ for some (unique) permutation of
\setn.
Let $\wnm$ be the word $X_{\gs(1)}\dotsm X_{\gs(n)}$.
Since $\set{X_i}_{i=1}^n$ and $\set{Y_i}_{i=1}^n$ are independent,
$\wnm$ has the same distribution as $X_1\dotsm X_n$, and is thus a uniformly
random word in $\setm^n$.
Consequently, \refT{Tinv} yields $\inv(W\nm)\sim\cgnm$.
Moreover, since $i<j\iff Y_{\gs(i)}<Y_{\gs(j)}$,
\begin{equation*}
  \begin{split}
	\inv(W\nm)
&=\sum_{1\le i<j\le n} \ett{X_{\gs(i)}>X_{\gs(j)}}
=\sum_{i,j=1}^n \ett{X_{\gs(i)}>X_{\gs(j)} \text{ and } i<j}
\\
&=\sum_{i,j=1}^n \ett{X_{\gs(i)}>X_{\gs(j)} \text{ and } Y_{\gs(i)}<Y_{\gs(j)}}
\\
&=\sum_{k,l=1}^n \ett{X_{k}>X_{l}} \ett{Y_{k}<Y_{l}}.
  \end{split}
\end{equation*}

We have shown the following, yielding our second construction of $\gnm$.

\begin{theorem}
  Let $X_i$ and $Y_i$ be as above, and define the random variable
  \begin{equation}\label{uxy}
U\nm\=\sum_{i,j=1}^n \ett{X_{i}>X_{j}} \ett{Y_{i}<Y_{j}}.
  \end{equation}
Then $U\nm\sim \cgnm$.
In other words, $\gnm\eqd U\nm$.
\qed
\end{theorem}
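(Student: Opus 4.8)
The plan is to couple the two \iid{} families and let the continuous variables $Y_i$ induce a uniformly random ordering of the letters $X_i$, thereby reducing the claim to \refT{Tinv}. The guiding observation is that each summand of \eqref{uxy} marks a pair $(i,j)$ that is at once an inversion in value, $X_i>X_j$, and in reverse $Y$-rank, $Y_i<Y_j$; ranking the indices by their $Y$-values should convert this double condition into an ordinary inversion count of a random word.

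First I would note that $Y_1,\dots,Y_n$ are \as{} distinct, so there is an \as{} unique permutation $\gs$ of \setn{} with $Y_{\gs(1)}<\dots<Y_{\gs(n)}$, and put $\wnm\=X_{\gs(1)}\dotsm X_{\gs(n)}$. The crucial distributional input is that $\wnm$ is a uniformly random word in $\setm^n$: since $\gs$ is a measurable function of $(Y_1,\dots,Y_n)$ alone, it is independent of $(X_1,\dots,X_n)$, and because the $X_i$ are \iid{} we have $(X_{\tau(1)},\dots,X_{\tau(n)})\eqd(X_1,\dots,X_n)$ for every fixed permutation $\tau$; conditioning on $\gs=\tau$ and averaging over the law of $\gs$ then shows $\wnm\eqd X_1\dotsm X_n$. \refT{Tinv} immediately gives $\inv(\wnm)\sim\cgnm$.

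It remains to identify $\inv(\wnm)$ with $U\nm$. Because $\gs$ orders the $Y$-values increasingly, the constraint $i<j$ in $\inv(\wnm)=\sum_{1\le i<j\le n}\ett{X_{\gs(i)}>X_{\gs(j)}}$ is equivalent to $Y_{\gs(i)}<Y_{\gs(j)}$; turning the restricted sum into an unrestricted double sum over $i,j\in\setn$ with the extra indicator $\ett{Y_{\gs(i)}<Y_{\gs(j)}}$ and substituting $k=\gs(i)$, $l=\gs(j)$ — a bijection of $\setn^2$ onto itself — rewrites the expression as $\sum_{k,l=1}^n\ett{X_k>X_l}\ett{Y_k<Y_l}=U\nm$. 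As this identity holds on the full-measure event that the $Y_i$ are distinct, $U\nm\eqd\inv(\wnm)\sim\cgnm$, as claimed. The one genuine subtlety, and the step I would treat most carefully, is the uniformity of $\wnm$: the reordering permutation $\gs$ is extracted from the same experiment, so one must invoke its independence from the $X_i$ to be sure that this data-dependent shuffling introduces no bias; the remaining manipulations are pure bookkeeping.
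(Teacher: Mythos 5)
Your proposal is correct and follows essentially the same route as the paper: order the indices by the $Y$-values, observe that the resulting word is uniformly random by independence of the two families, invoke \refT{Tinv}, and identify $\inv(\wnm)$ with $U\nm$ via the substitution $k=\gs(i)$, $l=\gs(j)$. The extra care you take in justifying the uniformity of $\wnm$ is a sound elaboration of a step the paper states more briefly.
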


Let $Z_i\=(X_i,Y_i)$; this yields a sequence of 
\iid{} random vectors taking values in
$\cS\=\setm\times\oi$.  Define the functions $h,h^*:\cS^2\to\bbR$ by 
\begin{align}
h\bigpar{(x_1,y_1),(x_2,y_2)}&\=\ett{x_{i}>x_{j}} \ett{y_{i}<y_{j}},
\label{h}
\\
h^*\bigpar{(x_1,y_1),(x_2,y_2)}&\=
h\bigpar{(x_1,y_1),(x_2,y_2)}+h\bigpar{(x_2,y_2),(x_1,y_1)}.
\label{h*}
\end{align}
Thus $h^*$ is symmetric and \eqref{uxy} can be written 
  \begin{equation}\label{U}
U\nm=\sum_{i, j=1}^n h\bigpar{Z_{i},Z_{j}}
=\sum_{1\le i< j\le n} h^*\bigpar{Z_{i},Z_{j}},
  \end{equation}
which shows that $U\nm$ is (for fixed $m$) a $U$-statistic \cite{U}.

\section{Lattice paths and Ferrers diagrams}\label{Spaths}

In this section we consider the special case $m=2$.
In this case, there is an alternative combinatorial description of the
Gaussian binomial coefficients using using lattice paths
instead of inversions, see \citet{Polya}. 
Indeed, consider lattice paths in the first quadrant, starting at the origin
and containing $n$ unit steps East or North. 
There are $2^n$ such paths, and they may be encoded by the
$2^n$ words of length $n$ with the alphabet \set{\mathsf E,\mathsf N}. 
The area under each horizontal step equals the number of previous vertical
steps,
so by summing, we see that the area under the path equals the number of
inversions in the corresponding word, where we use the ordering $\mathsf
E<\mathsf N$.

Consequently, \refT{Tinv} yields the following.
\begin{theorem}\label{Tpath}
Let $\theta(n)$ be the area under a uniformly random lattice path (of the
type above)
of length
$n$.
Then $\theta(n)\sim\cgnx2$.
In other words, $\gnx2\eqd \theta(n)$.
\end{theorem}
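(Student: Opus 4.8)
The plan is to exploit the bijection between lattice paths and binary words set up above---under which the area beneath a path equals the number of inversions in its word---and then invoke \refT{Tinv} with $m=2$. First I would make the path--word correspondence precise: a lattice path of the stated type with $n$ steps is determined by, and determines, the word $w=w_1\dotsm w_n\in\set{\mathsf E,\mathsf N}^n$ recording whether the $i$-th step is East or North. This is a bijection from the $2^n$ paths onto the $2^n$ words, so a \emph{uniformly} random path corresponds to a \emph{uniformly} random word. Ordering the alphabet by $\mathsf E<\mathsf N$ and identifying $(\mathsf E,\mathsf N)$ with $(1,2)$, the encoding word of a uniformly random path is exactly a uniformly random word $W_{n,2}$ of length $n$ over the two-letter alphabet appearing in \refT{Tinv}.

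Next I would verify the area--inversion identity by a bookkeeping argument. Writing the area as a sum over the horizontal steps, the East step occurring in position $j$ sits at height equal to the number of North steps among $w_1,\dots,w_{j-1}$, namely $\#\set{i<j:w_i=\mathsf N}$. Summing over the East steps gives
\[
\theta(n)=\sum_{1\le i<j\le n}\ett{w_i=\mathsf N \text{ and } w_j=\mathsf E}=\inv(w),
\]
where the last equality holds because, under $\mathsf E<\mathsf N$, the event $\set{w_i=\mathsf N,\ w_j=\mathsf E}$ coincides with $\set{w_i>w_j}$ as in \eqref{inv}.

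Combining the two steps, $\theta(n)=\inv(W_{n,2})$ with $W_{n,2}$ uniformly random, and \refT{Tinv} (with $m=2$) gives $\inv(W_{n,2})\sim\cgnx2$; hence $\theta(n)\sim\cgnx2$, \ie $\gnx2\eqd\theta(n)$. I expect no real obstacle here, since the entire content is the elementary area $=$ inversions identity; the only point demanding care is to orient the alphabet so that $\mathsf E<\mathsf N$, ensuring that a North step preceding an East step is counted as an inversion $w_i>w_j$ rather than the reverse.
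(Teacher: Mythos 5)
Your proposal is correct and follows exactly the route the paper takes: encode each of the $2^n$ paths by its word in $\set{\mathsf E,\mathsf N}^n$, observe that the area under the path equals $\inv(w)$ under the ordering $\mathsf E<\mathsf N$ (each East step contributing the number of preceding North steps), and apply \refT{Tinv} with $m=2$ to the resulting uniformly random word. The paper presents this same argument in the paragraph preceding the theorem statement, only more tersely; your explicit check of the alphabet orientation is the one point of care, and you handle it correctly.
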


The random variable $\theta(n)$ was studied by
\citet{Takacs}, who found its mean and variance and proved
a central limit theorem and a local limit theorem
(our Theorems \ref{Tmean}, \ref{Tlimn} and \ref{Tloc} for $m=2$).

By symmetry, we may instead consider the area $\theta'(n)$ between the path and the
$y$-axis. This area can be regarded as a Ferrers diagram; if
the path ends at $(s_1,s_2)$, then the height (number of non-empty rows) 
$h$ and width $w$ of the Ferrers diagram satisfy 
$h\le s_2$ and $w\le s_1$, and there is a
bijection between all paths ending at $(s_1,s_2)$ and all such Ferrers diagrams.
(Note the  bijection between such Ferrers diagrams with a given area $N$ and
the partitions of $N$ into at most $s_2$ parts, each at most $s_1$;
see \cite[Theorem 3.5]{Andrews}.)

Alternatively, by adding an extra row and column, we obtain a Ferrers
diagram with height $s_2+1$  and width $s_1+1$; its right boundary consists
of a path from $(-1,0)$ to $(s_1,s_2+1)$, beginning with a horizontal step
and ending with a vertical. Moreover, there is a bijection between 
all paths ending at $(s_1,s_2)$ and all such Ferrers diagrams. We further
see that the area of this Ferrers diagram equals $\theta'+s_1+s_2+1$, where
$\theta'$ is the area between the (original) path and the $y$-axis.

The \emph{semiperimeter} of a Ferrers diagram equals its height plus width,
and we thus have obtained a bijection between all Ferrers diagram with
semiperimeter $n+2$ and all (north-east) lattice paths of length $n$.
This bijection gives a correspondence between uniformly random Ferrers
diagrams with semiperimeter $n+2$ and uniformly random lattice paths of
length $n$, yielding the following theorem.

\begin{theorem}\label{TFerrers}
  Let $A_n$ be the area 
of a uniformly random Ferrers
  diagram with semiperimeter $n+2$.
Then $A_n-n-1\sim\cgnx2$.
In other words, $\gnx2\eqd A_n-n-1$.
\end{theorem}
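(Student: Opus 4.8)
The goal is to establish the bijection between Ferrers diagrams with semiperimeter $n+2$ and (north-east) lattice paths of length $n$, and to track how the area transforms under this bijection. Let me sketch the proof strategy for \refT{TFerrers}.

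First I would fix a lattice path of length $n$ ending at some point $(s_1,s_2)$ with $s_1+s_2=n$. The construction in the preceding discussion gives a Ferrers diagram obtained by adjoining an extra row and column; the resulting diagram has height $s_2+1$ and width $s_1+1$, so its semiperimeter is $(s_1+1)+(s_2+1)=n+2$, as required. The key observation is that this correspondence, ranging over all $s_1,s_2\ge 0$ with $s_1+s_2=n$, is a genuine bijection onto the set of \emph{all} Ferrers diagrams with semiperimeter $n+2$: every such diagram has a right/lower boundary that is a monotone staircase path, and stripping the added row and column recovers a unique path of length $n$. Since the map is a bijection and there are $2^n$ paths, a uniformly random Ferrers diagram with semiperimeter $n+2$ corresponds to a uniformly random lattice path of length $n$, and conversely.

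Next I would verify the area bookkeeping. By \refT{Tpath}, the area $\theta(n)$ under a uniformly random path has distribution $\cgnx2$, and by the stated symmetry the area $\theta'(n)$ between the path and the $y$-axis has the same distribution $\cgnx2$. The discussion already records that the augmented Ferrers diagram has area $\theta'+s_1+s_2+1=\theta'+n+1$. Writing $A_n$ for the area of the random Ferrers diagram corresponding to the random path, we therefore have $A_n=\theta'(n)+n+1$, so $A_n-n-1=\theta'(n)\sim\cgnx2$. Because the correspondence matches the uniform measure on paths with the uniform measure on diagrams, $A_n-n-1$ and $\theta'(n)$ agree in distribution, giving $\gnx2\eqd A_n-n-1$.

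The main obstacle—really the only nontrivial point—is confirming that the augmentation map is a bijection onto all Ferrers diagrams of semiperimeter $n+2$, rather than merely an injection into them. One must check both that every diagram of semiperimeter $n+2$ arises (surjectivity) and that the added row and column are canonically identifiable for removal (injectivity); this hinges on the fact that augmenting forces the boundary path to begin with a horizontal step and end with a vertical one, which pins down where the extra row and column sit. Once this is settled, the distributional identity follows immediately from \refT{Tpath} together with the symmetry $\theta(n)\eqd\theta'(n)$, with no further computation needed.
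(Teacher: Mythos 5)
Your proof is correct and follows essentially the same route as the paper: the paper likewise relies on the augmentation bijection (adding a row and column to get semiperimeter $n+2$), the area identity $A_n=\theta'(n)+n+1$, the symmetry $\theta'(n)\eqd\theta(n)$, and \refT{Tpath}. Your additional care in checking that the augmentation map is a genuine bijection onto \emph{all} Ferrers diagrams of semiperimeter $n+2$ is exactly the content the paper places in the discussion preceding the theorem statement rather than in the proof itself.
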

\begin{proof}
If $\theta'(n)$ is the area between the corresponding random lattice path
and the 
$y$-axis, then 
the arguments above show that
\begin{equation*}
  A_n=\theta'(n)+n+1\eqd\theta(n)+n+1
\end{equation*}
and the result follows by \refT{Tpath}.
\end{proof}

\begin{corollary}
The random variable $A_n$ has mean and variance
\begin{align}
  \E A_n&=
\E\gnx2+n+1=
\frac{n^2+7n+8}8 , \label{amean1}\\
  \Var A_n&=
\Var\gnx2
=\frac{n(n-1)(2n+5)}{96}.\label{amean2}
\end{align}
\end{corollary}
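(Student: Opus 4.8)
The plan is to reduce everything to \refT{TFerrers} together with the formulas of \refT{Tmean} specialised to $m=2$. By \refT{TFerrers} we have $\gnx2\eqd A_n-n-1$, equivalently $A_n\eqd\gnx2+n+1$. Since adding the deterministic constant $n+1$ shifts the mean by $n+1$ and leaves the variance unchanged, this identity in distribution immediately yields $\E A_n=\E\gnx2+n+1$ and $\Var A_n=\Var\gnx2$, which are exactly the first equalities recorded in \eqref{amean1} and \eqref{amean2}.

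First I would compute $\E\gnx2$ and $\Var\gnx2$ by setting $m=2$ in \eqref{tmean1} and \eqref{tmean2}. Here the factors specialise to $(m-1)/m=\tfrac12$ and $(m^2-1)/m^2=\tfrac34$, so that $\E\gnx2=\frac{n(n-1)}{8}$ and $\Var\gnx2=\frac{n(n-1)(2n+5)}{96}$. The variance claim \eqref{amean2} then follows at once. For the mean, I would substitute into $\E A_n=\E\gnx2+n+1=\frac{n(n-1)}{8}+n+1$ and combine over the common denominator $8$; the numerator becomes $n(n-1)+8(n+1)=n^2+7n+8$, which gives \eqref{amean1}.

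The argument is entirely routine: once \refT{TFerrers} is in hand there is essentially no obstacle, and the two defining features of the map $A_n\mapsto\gnx2+n+1$ that we exploit—translation invariance of the variance and additivity of the mean—are elementary. The only point demanding a moment's care is the algebraic simplification of the mean, where one must not forget to clear the denominator before collecting terms.
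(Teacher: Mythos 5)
Your proposal is correct and is precisely the paper's argument: the paper's proof reads simply ``By Theorems \refand{TFerrers}{Tmean}'', i.e.\ apply the distributional identity $A_n\eqd\gnx2+n+1$ and specialise \eqref{tmean1}--\eqref{tmean2} to $m=2$. Your arithmetic ($\E\gnx2=n(n-1)/8$, $\Var\gnx2=n(n-1)(2n+5)/96$, and $n(n-1)+8(n+1)=n^2+7n+8$) checks out.
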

\begin{proof}
By Theorems \refand{TFerrers}{Tmean}.  
\end{proof}

\refT{Tlimn} yields the central limit theorem
\begin{equation}
  \frac{ A_n-\E A_n}{\Var( A_n)\qq}\dto N(0,1);
\end{equation}
by \eqref{amean1}--\eqref{amean2}, this can also be written as 
\begin{equation}
  \frac{ A_n-n^2/8}{n\qqc}\dto N\Bigpar{0,\frac1{48}},
\end{equation}
which was proved by other methods by \citet{Schwerdtfeger}.
Furthermore, \citet{Schwerdtfeger} showed that if $H_n$ is the height of the
Ferrers diagram, then there is joint convergence  of the normalised variables 
\begin{equation}
\lrpar{\frac{A_n-n^2/8}{\sqrt{n^3/48}} , \frac{H_n-n/2}{\sqrt{n/4}} }
\dto (\zeta_1,\zeta_2),
\end{equation}
where $\zeta_1,\zeta_2$ are independent standard normal variables.
The asymptotic normality of $H_n$ is immediate, since $H_n-1$ is the
$y$-coordinate of the endpoint of the corresponding lattice path, and thus
$H_n-1$ has the binomial distribution $\Bi(n,1/2)$. The joint convergence
follows  by \refT{TV+}.

\section{Proofs of Theorems \ref{Tmean}--\ref{Tlimnm} and \ref{TV+}}\label{Spf}

We will base most of the proofs on the representation in
\eqref{uxy}--\eqref{U}. (It is also possible to use \eqref{V}, 
see \refR{Rvdecomp} and the proof of \refT{TV+};
\eqref{V}  is
simpler in some ways, but we prefer the symmetry in \eqref{uxy}--\eqref{U}.)

We use the notations, with $Z$, $h$, $h^*$ as in \refS{SU}, see
\eqref{h}--\eqref{h*}, 
\begin{align}
  I\ijx&\=h(Z_i,Z_j)=\ett{X_i>X_j}\ett{Y_i<Y_j},
\label{iij}\\
\ixij&\=h^*(Z_i,Z_j)=I\ijx+I_{ji}. \label{iij*}
\end{align}
Thus \eqref{U} can be written
\begin{equation}\label{master}
  \gnm\eqd U\nm =\sumijn \ixij.
\end{equation}

\begin{proof}[Proof of \refT{Tmean}]
  By symmetry and the independence of $I\ijx$ and $I_{kl}$ when \set{i,j}
  and \set{k,l} are disjoint,
\eqref{master} implies
\begin{align}
  \E \gnm& = \binom n2 \E I^*_{12}=n(n-1) \E I_{12},\label{e1}
\\
\Var \gnm& = \binom n2 \Var I^*_{12}
+n(n-1)(n-2) \Cov\bigpar{I^*_{12}, I^*_{13}}. \label{v1}
\end{align}
Clearly,
\begin{equation}
\E I\ijx =\P(X_i>X_j)\P(Y_i<Y_j)=\frac{\binom m2}{m^2}\cdot \frac12  
=\frac{m-1}{4m}
\end{equation}
and 
\begin{equation}\label{qk}
  \E\ixij =2\E I\ijx =\frac{m-1}{2m}=\frac12-\frac1{2m};
\end{equation}
any of these yields \eqref{tmean1} by \eqref{e1}.

Since $\ixij$ is $0/1$-valued, it follows from \eqref{qk} also that
\begin{equation}\label{v2}
\Var\ixij = \E\ixij(1-\E \ixij)=\frac14\Bigpar{1-\frac1{m^2}}.
\end{equation}
Furthermore, again using symmetry,
\begin{equation*}
  \begin{split}
 & \E\bigpar{I^*_{12}I^*_{13}}
=   2\E\bigpar{I_{12}I_{13}} +   2\E\bigpar{I_{21}I_{13}}
\\&
=2\P\bigpar{X_1>X_2,X_3}\P\bigpar{Y_1<Y_2,Y_3}
+
2\P\bigpar{X_2>X_1>X_3}\P\bigpar{Y_2<Y_1<Y_3}
\\&
=2\frac{\sumim(i-1)^2}{m^3}\cdot\frac13 + 2\frac{\binom m3}{m^3}\cdot\frac16
=\frac{m(m-1)(2m-1)}{9m^3}+\frac{m(m-1)(m-2)}{18m^3}
\\&
=\frac{(m-1)(5m-4)}{18m^2}
  \end{split}
\end{equation*}
and hence
\begin{equation}\label{v3}
  \begin{split}
  \Cov\bigpar{I^*_{12},I^*_{13}}
&= \E\bigpar{I^*_{12}I^*_{13}} - \E\bigpar{I^*_{12}}^2
=\frac{(m-1)(5m-4)}{18m^2} - \frac{(m-1)^2}{4m^2}
\\&
=\frac{(m-1)(m+1)}{36m^2}.
  \end{split}
\end{equation}
The variance formula \eqref{tmean2} follows from \eqref{v1}, \eqref{v2} and
\eqref{v3}. 
\end{proof}

\begin{proof}[Proof of Theorem \ref{Tlimm}]
Consider the random word $\wnm=X_1\dotsm X_n$ in \refT{Tinv}.
If we condition on the letters $X_1,\dots,X_m$ being distinct, then the
number of inversions $\inv(\wnm)$ has the same distribution as the number
$G_n$ of inversions in a random permutation.
Hence, for any set $A\subset\bbN$,
\begin{equation*}
  \P\bigpar{\inv(\wnm)\in A\mid X_1,\dots, X_n \text{ distinct}}
=\P(G_n\in A)
\end{equation*}
and thus
\begin{multline*}
\lrabs{\P\bigpar{\inv(\wnm)\in A}-\P(G_n\in A)}
\le\P(X_1,\dots, X_n \text{ not distinct})
\\
\le\binom n2\P(X_1=X_2)=\frac{\binom n2}m\to0	  
\end{multline*}
as \mtoo, and thus $\gnm\eqd\inv(\wnm)\dto G_n$.
\end{proof}

\begin{remark}
  We have actually proved that the total variation distance
  $\dtv(\gnm,G_n)\le\binom n2/m$. Moreover, the bound can be improved to 
$$
\dtv(\gnm,G_n)\le\P(X_1,\dots, X_n \text{ not distinct})=1-(m)_n/m^n,
$$ 
where $(m)_n\=m!/(m-n)!$.
\end{remark}

\begin{proof}[Proof of Theorems \ref{Tlimn} and \ref{Tlimnm}]
The two versions in each theorem are equivalent by \eqref{tmean2}, so it
suffices to prove, for example, \eqref{tlimn2} and \eqref{tlimnm2}.

  The central limit theorem \refT{Tlimn} follows immediately from
  Hoeffding's central limit theorem for $U$-statistics \cite{U}
  without any further calculations.
Moreover, we shall see that
the decomposition method used by Hoeffding \cite{U}
yields also \refT{Tlimnm}; we therefore do the decomposition explicitly.

The idea is to decompose each term $\ixij$ as 
\begin{equation}
  \label{h1}
\ixij=\mu+\xi_i+\xi_j+\eta\ijx,
\end{equation}
where $\mu\=\E\ixij$,
\begin{equation}
  \xi_i\=\E\bigpar{\ixij-\mu\mid Z_i}=\E\bigpar{\ixij\mid X_i,Y_i}-\mu
\end{equation}
and $\eta\ijx$ is defined by \eqref{h1}.
Then the random variables $\xi_i$ ($1\le i\le n$)  and
$\eta\ijx$ ($1\le i<j\le n$) have mean 0 and are orthogonal (in $L^2$), so
they are uncorrelated. In particular,
\begin{equation}\label{qa}
  1\ge \Var\ixij =\Var\xi_i+\Var\xi_j+\Var\eta\ijx.
\end{equation}
Moreover, $\xi_i=g(Z_i)$ for some function $g$, and
thus the variables
$\xi_i$ are i.i.d.

By summing \eqref{h1}, we obtain by \eqref{master}
a corresponding decomposition of $U\nm$:
\begin{equation}\label{u1}
  U\nm=\binom n2\mu + (n-1)\sumin\xi_i+\sumijn\eta\ijx.
\end{equation}
Hence,
\begin{equation}\label{u2}
\frac{ U\nm-\E U\nm}{n\qqc}=\frac{n-1}{n} n\qqw\sumin\xi_i+n\qqcw R,
\end{equation}
where $R\=\sumijn\eta\ijx$. 
Since the variables $\eta\ijx$ are uncorrelated, and $\Var\eta\ijx\le1$ by
\eqref{qa}, we have 
\begin{equation}\label{ER2}
\E R^2=  \Var R =\sumijn\Var \eta\ijx 
\le \binom n2 \le n^2,
\end{equation}
and thus $\E(n\qqcw R)^2\to0$. 
Hence, the last term in \eqref{u2} is a small
remainder term that can be ignored when \ntoo.
Furthermore, the decomposition \eqref{u1} yields the variance decomposition
\begin{equation}\label{ql}
  \begin{split}
\Var  U\nm&= (n-1)^2\sumin\Var\xi_i+\sumijn\Var\eta\ijx
\\&
=n(n-1)^2\Var\xi_1+\binom n2 \Var\eta_{12}
\\&
\sim n^3\Var\xi_1	
  \end{split}
\raisetag\baselineskip
\end{equation}
as \ntoo, and thus by \eqref{tmean2},
\begin{equation}\label{vxi}
  \Var\xi_1=\frac{1}{36}\Bigpar{1-\frac1{m^2}}.
\end{equation}

For fixed $m$ (\refT{Tlimn}), the standard central limit theorem for sums of
\iid{} random variables now shows that 
\begin{equation}
  \frac{\sumin\xi_i}{n\qq}\dto
 N\Bigpar{0,\frac{m^2-1}{36m^2}},
\end{equation}
and thus \eqref{tlimn2} follows from \eqref{u2}.

For $m\to\infty$ (\refT{Tlimnm}), we have $\Var\xi_1\to1/36$ by \eqref{vxi};
moreover, the random variables $\xi_i$ are uniformly bounded (by 1), and
thus the central limit theorem with \eg{} Lyapounov's condition
\cite[Theorem 7.2.2]{Gut} applies and shows that
\begin{equation}
  \frac{\sumin\xi_i}{n\qq}\dto
 N\Bigpar{0,\frac{1}{36}},
\end{equation}
and thus \eqref{tlimnm2} follows from \eqref{u2}.
\end{proof}

\begin{remark}
  It is interesting to do the decomposition \eqref{h1} explicitly.
Using the centred variables 
\begin{align}
X'_i&\=X_i-\E X_i=X_i-\frac{m+1}2, \label{x'}
\\
Y'_i&\=Y_i-\E Y_i= Y_i-\frac12,  
\end{align}
we have by \eqref{iij}
\begin{align}
  \E(I\ijx\mid X_i,Y_i)
&=
\frac{X_i-1}m (1-Y_i)
=
\frac{X'_i+(m-1)/2}m \Bigpar{\frac12-Y'_i}, \label{chu}
\\
  \E(I\jix\mid X_i,Y_i)
&=
\frac{m-X_i}m Y_i
=
\frac{(m-1)/2-X'_i}m \Bigpar{Y'_i+\frac12},
\end{align}
{and thus}, using \eqref{iij*} and \eqref{qk},
\begin{equation}
  \label{rxi}
\xi_i
\=
  \E(I^*\ijx\mid X_i,Y_i)  -   \E I^*\ijx
=
-\frac{2}m X'_iY'_i.
\end{equation}
Hence, the decomposition is
\begin{equation}
I^*\ijx
=
\frac{m-1}{2m}-\frac{2}m X'_iY'_i-\frac{2}m X'_jY'_j +\eta\ijx
\end{equation}
and
\begin{equation}
U\nm=
\binom n2 \frac{m-1}{2m}-\frac{2(n-1)}m \sumin X'_iY'_i + R.
\end{equation}

Note also that \eqref{vxi} follows from \eqref{rxi}, and then 
\eqref{qa} yields, using \eqref{v2},
\begin{equation}\label{veta}
  \Var\eta\ijx=\Var\ixij-2\Var\xi_i
=\frac14\Bigpar{1-\frac1{m^2}}-\frac2{36}\Bigpar{1-\frac1{m^2}}
=\frac7{36}\Bigpar{1-\frac1{m^2}},
\end{equation}
which together with \eqref{vxi} and \eqref{ql} yield another proof of
\eqref{tmean2}.
\end{remark}

\begin{remark}\label{Rvdecomp}
  It is also interesting to do the corresponding orthogonal decomposition of
  $V\nm$ in \eqref{V}.
We have, similarly to \eqref{h1},
\begin{equation}
  \label{vx}
\ett{X_i>X_j}=\mu'+\xi'_i+\xi''_j+\eta'\ijx,
\end{equation}
where $\mu'\=\P(X_i>X_j)=\frac{m-1}{2m}$,
 and, with $X_i'$ as in \eqref{x'},
  \begin{align}
  \xi'_i&\=\P\bigpar{X_i>X_j\mid X_i}-\mu'=\frac{X'_i}{m},
\\	
  \xi''_j&\=\P\bigpar{X_i>X_j\mid X_j}-\mu'=-\frac{X'_j}{m},
  \end{align}
and $\eta'\ijx$ is defined by \eqref{vx}.
Summing we get, 
\begin{equation}\label{vv}
  \begin{split}
	  V\nm&=\E V\nm + \sumin(n-i)\xi_i'+\sumjn(j-1)\xi''_j+\sumijn\eta'\ijx
\\&
 =\E V\nm + \frac1m \sumin(n+1-2i)X_i'+\sumijn\eta'\ijx.
  \end{split}
\end{equation}

Straightforward calculations show that
\begin{align}
   \Var X_i'&= \frac1{12}(m^2-1), \\
\Var(\ett{X_i>X_j})&=\frac14\Bigpar{1-\frac1{m^2}},
\end{align}
and, by \eqref{vx},
\begin{equation}\label{vvv}
   \Var\eta'\ijx =\Var(\ett{X_i>X_j})-\Var\xi'_i-\Var\xi''_j
=\frac1{12}\Bigpar{1-\frac1{m^2}}.
\end{equation}
Hence, \eqref{vv} yields 
\begin{equation}
  \begin{split}
  \Var V\nm&=
\frac1{m^2} \sumin(n+1-2i)^2\Var X_i'+\sumijn\Var\eta'\ijx \\
&=\frac{n(n-1)(n+1)}{36}\Bigpar{1-\frac1{m^2}}
 +\frac{n(n-1)}{24}\Bigpar{1-\frac1{m^2}},
  \end{split}
\end{equation}
which gives yet another proof of
\eqref{tmean2}.

We can also prove Theorems \ref{Tlimn} and \ref{Tlimnm} using \eqref{vv}
instead of \eqref{u1};
again the final sum can be ignored since, using \eqref{vvv} and the fact
that the $\eta'\ijx$ are 
uncorrelated,
\begin{equation}\label{vvx}
  \Var\Bigpar{n\qqcw\sum_{i<j}\eta'\ijx}
= n^{-3}\binom n2 \frac1{12}\Bigpar{1-\frac1{m^2}}
<\frac1{24n}\to0
\end{equation}
as \ntoo, \cf{} \eqref{ER2}.
The summands in 
$ \sumin(n+1-2i)X_i'$
are not identically distributed,
but that does not matter since Lyapounov's condition holds. 
See \cite[Corollary 11.20]{SJIII}
for a general limit theorem for asymmetric sums like
\eqref{V}, and note that the argument in \refS{SU} is an instance of a
general method to convert such sums into (symmetric) $U$-statistics by
introducing the auxiliary variables $Y_i$, see \cite[Remark 11.21]{SJIII}.

In the case $m=2$, one can check that $\eta'\ijx=-X'_iX'_j$
and thus 
\begin{equation}
\sumijn\eta'\ijx=-\frac12\Bigpar{\sumin X'_i}^2+\frac n2,  
\end{equation}
which shows that the decomposition \eqref{vv} then is essentially the same as 
the decomposition used by \citet{Takacs}.
\end{remark}

\begin{proof}[Proof of Theorem \ref{TV+}]
We use the decomposition \eqref{vv} of $V\nm$, and $N_k=\sumin\ett{X_i=k}$.
The result follows by the central limit theorem with Lyapounov's condition
applied to the random 
vector 
\begin{multline*}
\lrpar{\frac{\sumin(n+1-2i)X_i'}{n\qqc},\,
 \frac{N_1-\E N_1}{n\qq},\dots,\frac{N_m-\E N_m}{n\qq}}
\\=
\sumin\lrpar{\frac{(n+1-2i)X_i'}{n\qqc},
\frac{\ett{X_i=1}-1/m}{n\qq},\dots,
\frac{\ett{X_i=m}-1/m}{n\qq}},
\end{multline*}
together with \eqref{vv} and \eqref{vvx}; the variances and covariances are
easily computed, noting that
$\Cov\bigpar{\sumin(n+1-2i)X_i',\;\sumin\ett{X_i=k}}=0$ for each $k$ since 
$\sumin(n+1-2i)=0$.
(This vector-valued central limit theorem follows, as is well-known, 
from the real-valued
version
\cite[Theorem 7.2.2]{Gut}
by the Cram\'er--Wold device \cite[Theorem 5.10.5]{Gut}.)
\end{proof}

\section{Proof of \refT{Tloc}}\label{Spfloc}

To prove the local limit theorem \refT{Tloc}, we need estimates of the
\pgf{} $\gggmnq=m^{-n}\ggmnq$ for $q=\eith$ on the unit circle. We derive these
estimates from the corresponding estimates of $\binom{n}{\nnm}_q$ in
\cite{SJ239} rather than from scratch. (We do not know whether the estimates
below are the best possible.) 

Consider a random word $\wnm$ as in \refS{Sinv}, let again $N_1,\dots,N_m$
be the number of occurrences of the different letters, and let
$N^*\=\max_{k\le n}N_k$ and $N_*\=n-N^*$.
Similarly, for given $\nnm$ with $n_1+\dots+n_m=n$, let
$n^*\=\max_{k\le n}n_k$ and $n_*\=n-n^*$; let further
\begin{equation*}
  F_\nnm(q)\=\binom{n}{\nnm}_q\Big/\binom{n}{\nnm}
\end{equation*} 
be the \pgf{} of the number of inversions in a random word  consisting of
$n_1$ 1's, \dots, $n_m$ $m$'s, \cf{} \eqref{invq}.
Thus $F_\nnm(q)$ is the \pgf{} of $V\nm=\inv(W\nm)$ conditioned on
$N_k=n_k$, $k=1,\dots,m$.

\begin{lemma}
  \label{LGW}
There exists $c>0$ such that for all $m\ge2$, $n\ge2$ and real
$\gth\in[-\pi,\pi]$, 
\begin{equation}\label{lgw}
  \bigabs{\gggmn(\eith)}
\le
\begin{cases}
  e^{-cn^3\gth^2},&0\le|\gth|\le1/n,\\
  e^{-cn},&1/n\le|\gth|\le\pi.
\end{cases}
\end{equation}
\end{lemma}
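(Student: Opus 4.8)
The plan is to bound the probability generating function $\gggmn(\eith) = m^{-n}\ggmn(\eith)$ by leveraging the decomposition of $\ggmn(q)$ as a sum over compositions $(\nnm)$ via \eqref{ggmn}. Since $\ggmn(q) = \sumkkm \binom{n}{\kkm}_q$, we have
\begin{equation*}
\bigabs{\gggmn(\eith)}
= m^{-n}\biggabs{\sumkkm \binom{n}{\kkm}_{\eith}}
\le m^{-n}\sumkkm \binom{n}{\kkm}\bigabs{F_\nnm(\eith)},
\end{equation*}
where I have written each $q$-multinomial coefficient as $\binom{n}{\nnm}F_\nnm(q)$ using the notation from the lemma's setup. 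The quantity $m^{-n}\binom{n}{\nnm}$ is exactly the probability that the multinomial vector $(N_1,\dots,N_m)$ equals $(\nnm)$, so the right-hand side is $\E\bigabs{F_{(N_1,\dots,N_m)}(\eith)}$. Thus the whole problem reduces to controlling $\bigabs{F_\nnm(\eith)}$ uniformly and then taking expectation over the random composition.

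First I would import the estimates for the single-composition \pgf{} $F_\nnm(q)$ from \cite{SJ239}; these should give bounds of the form $\bigabs{F_\nnm(\eith)}\le e^{-c' \sigma^2_\nnm \gth^2}$ for small $\gth$ and exponential decay $e^{-c'n_*}$ for larger $\gth$, where $\sigma^2_\nnm = \Var(\inv)$ for that fixed composition is of order $n^2 n_*$ (since the variance of the number of inversions in a word with letter-counts $\nnm$ scales like $n^2 n_*$, driven by the ``minority mass'' $n_* = n - n^*$). The key structural observation is that the decay rate of $\bigabs{F_\nnm(\eith)}$ is governed by $n_*$: compositions that are highly unbalanced (one letter dominating) have small $n_*$ and hence weak decay, but such compositions are exponentially rare under the multinomial law.

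The main step is then the expectation over $(N_1,\dots,N_m)$. For the regime $1/n\le|\gth|\le\pi$, I expect the single-composition bound to read roughly $e^{-c' N_*}$, so I need $\E e^{-c' N_*}\le e^{-cn}$. This is a large-deviation statement: since $N^* = \max_k N_k \le n - N_*$ and under the uniform multinomial law the minority mass $N_*$ is typically of order $n$ (indeed $N_* \ge n(1 - 1/m)\cdot(\text{const})$ with overwhelming probability for all $m\ge2$), the event $\{N_* \text{ small}\}$ costs an exponential factor that dominates the weak decay there. I would handle $m=2$ and large $m$ uniformly by noting $\E N_* $ is bounded below by a constant multiple of $n$ independent of $m\ge2$, and by a union/Chernoff bound on $\P(N^* > n - t)$ for each threshold $t$. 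For the regime $0\le|\gth|\le 1/n$, the single-composition bound $e^{-c'\sigma^2_\nnm\gth^2} = e^{-c'' n^2 N_* \gth^2}$ must be averaged; since $N_*\gtrsim n$ with high probability, the typical exponent is $\gtrsim n^3\gth^2$, and the contribution from the rare small-$N_*$ compositions can be absorbed because there $|F|\le1$ trivially while their multinomial probability is exponentially small.

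The hard part will be making the two regimes patch together with a \emph{single} constant $c$ valid uniformly in all $m\ge2$ and $n\ge2$, rather than constants that degrade as $m\to\infty$ or near the boundary $|\gth|=1/n$. The delicate point is that the effective variance scale $n^2 N_*$ is itself random, so the small-$\gth$ Gaussian bound requires controlling $\E \exp(-c'' n^2 N_* \gth^2)$ and showing it does not lose the needed $n^3$ rate; I expect to split the expectation on the event $\{N_* \ge \delta n\}$ for a fixed $\delta>0$ (chosen uniformly in $m$) and to bound the complementary event's contribution by its exponentially small probability, using that $\bigabs{F_\nnm}\le 1$ always. Verifying that the large-deviation estimate $\P(N_* < \delta n)\le e^{-cn}$ holds with a constant independent of $m$ is where the multinomial concentration must be invoked carefully, and this is the principal obstacle.
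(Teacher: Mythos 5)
Your plan is essentially the paper's proof: the same reduction $\bigabs{\gggmn(\eith)}\le\E\bigabs{F_{(N_1,\dots,N_m)}(\eith)}$ by conditioning on the letter counts, the same two imported estimates from \cite{SJ239} (a Gaussian-type bound $e^{-\gss\gth^2/4}$ with $\gss\ge n^2n_*/36$ for $|\gth|\le\tau/n$, and a bound $e^{-cn_*}$ for $\tau/n\le|\gth|\le\pi$), and the same splitting on the event that the composition is balanced, whose complement has probability $\le e^{-cn}$ uniformly in $m$. (The paper gets this last estimate by partitioning the alphabet into three blocks of size at most $m/2$ and applying Chernoff's bound to $\Bi(n,1/2)$, which sidesteps a union bound over $m$ letters; your union-bound variant also works, so the step you flag as the principal obstacle is not actually problematic.)

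There is, however, one concrete gap. In the regime $0\le|\gth|\le\tau/n$ you propose to absorb the unbalanced compositions additively: $\bigabs{\gggmn(\eith)}\le e^{-c'n^3\gth^2}+\P(N_*<\delta n)\le e^{-c'n^3\gth^2}+e^{-cn}$. This yields $e^{-c''n^3\gth^2}$ only when $n^3\gth^2$ is bounded below by a positive constant, so that the resulting constant prefactor can be swallowed by shrinking the exponent. When $n^3\gth^2=O(1)$, i.e.\ $|\gth|\lesssim n^{-3/2}$, the target bound is $e^{-c''n^3\gth^2}=1-c''n^3\gth^2+O(n^6\gth^4)$, and the additive error $e^{-cn}$ can dominate $n^3\gth^2$ (take $\gth$ exponentially small in $n$), so the additive bound degenerates and proves nothing. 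The paper handles this range by a separate multiplicative argument: it shows $1-\bigabs{\gggmn(\eith)}\ge\P(N_*\ge\delta n)\bigpar{1-e^{-c'n^3\gth^2}}\ge c''n^3\gth^2$, using that $\P(N_*\ge\delta n)$ is bounded below by a positive constant uniformly in $m$ and $n$. You need this (or an equivalent) extra step near $\gth=0$; the rest of your outline goes through as described.
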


\begin{proof}
We assume in the proof for simplicity that $n$ is large enough; this case is
enough for our application in \refT{Tloc}.
It is easy (but not very interesting) to complete the proof by verifying the
estimates \eqref{lgw}
for each fixed $n\ge2$ and some $c$ (that now might depend on $n$); we omit 
the details but mention that the case when $m$ is large follows using
\refT{Tlimm}. 
We let $c_1,c_2,\dots$ denote some positive constants whose values are not
important. 

  By \cite[Lemma 4.1]{SJ239} there exists $\tau\in(0,1)$ such that if
  $|\gth|\le\tau/n$, then for any $\nnm$ with $n_1+\dots+n_m=n$,
  \begin{equation*}
\bigabs{F_\nnm(\eith)}	
\le e^{-\gss\gth^2/4},
  \end{equation*}
where $\gss$ depends on $\nnm$ and
by \cite[Lemma 3.1]{SJ239} $\gss\ge n^2n_*/36$.
Furthermore,
  by \cite[Lemma 4.4]{SJ239} there exists $\cc>0$ such that 
if $\tau/n\le|\gth|\le\pi$, then
  \begin{equation*}
\bigabs{F_\nnm(\eith)}	
\le e^{-\ccx n_*}.  
  \end{equation*}
Hence, if $n^*\le 3n/4$ so that $n_*\ge n/4$ we have
the estimates
  \begin{equation}\label{fb}
\bigabs{F_\nnm(\eith)}	
\le e^{-\cc n^3\gth^2},
\qquad |\gth|\le\tau/n,  \ccdef\ccf
  \end{equation}
and
  \begin{equation}\label{fc}
\bigabs{F_\nnm(\eith)}	
\le e^{-\cc n},
\qquad \tau/n\le|\gth|\le\pi.  \ccdef\ccfc
  \end{equation}

We return to our string $\wnm$ with random numbers $\NNm$
of different letters. 
We can, for any $m\ge2$, 
partition \set{1,\dots,m} into three sets with at most
$m/2$ elements each, and thus
\begin{equation}\label{ch}
  \P(N^*>3n/4)\le 3 \P\bigpar{\Bi(n,1/2)>3n/4}
\le 3e^{-\cc n} \ccdef\ccch
\end{equation}
by Chernoff's inequality, see \eg{} \cite[Theorem 2.1]{JLR}.

When $|\gth|\le\tau/n$, 
which implies $n^3\gth^2=O(n)$,
we obtain by \eqref{fb} and \eqref{ch},
\begin{equation}\label{jesper}
  \begin{split}
\bigabs{\gggmn(\eith)}
&=\bigabs{\E e^{\ii\gth V\nm}}	
\\&
=\Bigl|\E \bigpar{e^{\ii\gth V\nm}\mid N^*\le3n/4}\P(N^*\le3n/4)
\\&\qquad
+\E \bigpar{e^{\ii\gth V\nm}\mid N^*>3n/4}\P(N^*>3n/4)\Bigr|
\\&
\le e^{-\ccf n^3\gth^2}\P(N^*\le3n/4)+\P(N^*>3n/4)
\\&
\le e^{-\ccf n^3\gth^2}+3 e^{-\ccch n}
\\&
\le 4e^{-\cc n^3\gth^2}.
  \end{split}
\raisetag\baselineskip
\end{equation}
This verifies \eqref{lgw} with some $c>0$ for $\cc n\qqcw\le|\gth|\le\tau/n$.

For $|\gth|<\ccx n\qqcw$, we first note that $\P(N^*\le 3n/4) \ge \cc>0$ for
all $m,n\ge2$; \ccdef\cclgw
this holds for all large $n$ by \eqref{ch} (and is easily seen for each
fixed $n$).
Hence, by the calculations in \eqref{jesper},
\begin{equation*}
  \begin{split}
 1-\bigabs{\gggmn(\eith)}
&\ge
1-\P(N^*>3n/4)
- \P(N^*\le3n/4)e^{-\ccf n^3\gth^2}
\\&
=\P(N^*\le3n/4)
\bigpar{1- e^{-\ccf n^3\gth^2}}
\ge \ccx\cc n^3\gth^2,	\ccdef\cclhr
  \end{split}
\end{equation*}
verifying \eqref{lgw} in this case too (for $c\le\cclgw\cclhr$).

Finally, for $\tau/n\le|\gth|\le\pi$, we obtain by arguing as in
\eqref{jesper}, now using \eqref{fc} and \eqref{ch}, 
\begin{equation*}
  \begin{split}
\bigabs{\gggmn(\eith)}
&
\le e^{-\ccfc n}\P(N^*\le3n/4)+\P(N^*>3n/4)
\le e^{-\ccfc n}+3 e^{-\ccch n}
\\
\le e^{-\cc n},
  \end{split}
\end{equation*}
provided $n$ is large enough.
This completes the proof (for large $n$) for the cases
$\tau/n\le|\gth|\le1/n$ and $1/n\le|\gth|\le\pi$.
\end{proof}

\begin{proof}[Proof of \refT{Tloc}]
Consider any sequence $m=m(n)\ge2$. We will show that \eqref{tloc} holds
uniformly in $k$ for any such sequence $m(n)$; this is equivalent to the
asserted uniform convergence for all $m\ge2$.

Denote the \chf{} of $\gnm$ by $\gfn(\gth)$, and 
 recall that it is given by
  $\gfn(\gth)=\gggmn(\eith)$, see \eqref{pgf}.
It follows from Theorems \refand{Tlimn}{Tlimnm}
 that 
 \begin{equation}\label{sjw}
\frac{\gnm-\mu\nm}{\gs\nm}\dto N(0,1)   
 \end{equation}
as \ntoo.
(To see this we may by considering subsequences assume that $m(n)$ converges
 to either a finite limit or to $\infty$; then \eqref{sjw} is
 \eqref{tlimn1} or \eqref{tlimnm1}.) 
Thus, by the continuity theorem, for any fixed $\gth\in\bbR$,
  \begin{equation}\label{dy}
e^{-\ii\gth\mu\nm/\gs\nm}\gfn(\gth/\gs\nm)\to 	
e^{-\gth^2/2}.
  \end{equation}
Let
  \begin{equation}\label{dy2}
r_n(\gth)\=e^{-\ii\gth\mu\nm/\gs\nm}\gfn(\gth/\gs\nm)\ett{|\gth|\le\pi\gs\nm}-
e^{-\gth^2/2},
  \end{equation}
and note that $r_n(\gth)\to0$ as \ntoo{} for each fixed $\gth$ by \eqref{dy}
since $\gs\nm\to\infty$ by \eqref{tmean2}.

By Fourier inversion we have
\begin{equation*}
  \begin{split}
  \gs\nm&\P(\gnm=k)
=\frac{\gs\nm}{2\pi}\int_{-\pi}^{\pi}e^{-\ii k t}\gf(t)\dd t
\\&
=\frac{1}{2\pi}\int_{-\pi\gs\nm}^{\pi\gs\nm}
 e^{-\ii k\gth/\gs\nm}\gf(\gth/\gs\nm)\dd \gth
\\&
=\frac{1}{2\pi}\intoooo
 e^{\ii(\mu\nm- k)\gth/\gs\nm} \Bigpar{r_n(\gth)+e^{-\gth^2/2}}\dd \gth
\\&
=\frac{1}{2\pi}\intoooo e^{\ii(\mu\nm- k)\gth/\gs\nm} r_n(\gth)\dd \gth
+\frac1{\sqrt{2\pi}} e^{-(\mu\nm- k)^2/2\gss\nm},
  \end{split}
\end{equation*}
and thus, for all $k\in\bbZ$,
\begin{equation*}
  \begin{split}
\lrabs{ \gs\nm\P(\gnm=k)- \frac1{\sqrt{2\pi}} e^{-(\mu\nm- k)^2/2\gss\nm}}
\le\frac{1}{2\pi}\intoooo \bigabs{r_n(\gth)}\dd \gth.
  \end{split}
\end{equation*}
The result \eqref{tloc} follows since
\begin{equation*}
\intoooo \bigabs{ r_n(\gth)}\dd \gth
\to0
\end{equation*}
as \ntoo{} by dominated convergence, using \refL{LGW}; note that if
$|\gth|\le\pi\gs\nm$, then $|\gth|\le n\qqc$ since $\pi^2\gss\nm<n^3$ by
\eqref{tmean2}, and hence \eqref{lgw} yields
\begin{equation*}
  \bigabs{\gfn(\gth/\gs\nm)}
=
  \bigabs{\gggmn(e^{\ii \gth/\gs\nm})}
\le e^{-c n^3\gth^2/\gss\nm}+e^{-cn}
\le e^{-c \gth^2}+e^{-c\gth^{2/3}};
\end{equation*}
hence, for all $n\ge2$ and $\gth\in\bbR$,
\begin{equation*}
  \bigabs{r_n(\gth)}
\le 2e^{-c \gth^2}+e^{-c\gth^{2/3}}.
\end{equation*}
The version with 
$\bmu\nm$ and $\bgss\nm$ follows in exactly the same way, starting with 
 \begin{equation}
\frac{\gnm-\bmu\nm}{\bgs\nm}\dto N(0,1),   
 \end{equation}
which is equivalent to \eqref{sjw} since $\bgss\nm\sim \gss\nm$ and
$\bmu\nm=\mu\nm+o(\gs\nm)$ as \ntoo{} by \refT{Tmean}.
\end{proof}

\newcommand\AAP{\emph{Adv. Appl. Probab.} }
\newcommand\JAP{\emph{J. Appl. Probab.} }
\newcommand\JAMS{\emph{J. \AMS} }
\newcommand\MAMS{\emph{Memoirs \AMS} }
\newcommand\PAMS{\emph{Proc. \AMS} }
\newcommand\TAMS{\emph{Trans. \AMS} }
\newcommand\AnnMS{\emph{Ann. Math. Statist.} }
\newcommand\AnnPr{\emph{Ann. Probab.} }
\newcommand\CPC{\emph{Combin. Probab. Comput.} }
\newcommand\JMAA{\emph{J. Math. Anal. Appl.} }
\newcommand\RSA{\emph{Random Struct. Alg.} }
\newcommand\ZW{\emph{Z. Wahrsch. Verw. Gebiete} }
\newcommand\DMTCS{\jour{Discr. Math. Theor. Comput. Sci.} }

\newcommand\AMS{Amer. Math. Soc.}
\newcommand\Springer{Springer-Verlag}
\newcommand\Wiley{Wiley}

\newcommand\vol{\textbf}
\newcommand\jour{\emph}
\newcommand\book{\emph}
\newcommand\inbook{\emph}
\def\no#1#2,{\unskip#2, no. #1,} 
\newcommand\toappear{\unskip, to appear}

\newcommand\arxiv[1]{\url{arXiv:#1.}}

\def\nobibitem#1\par{}

\end{document}